\newtheorem{thm}{Theorem}[section]
\newtheorem{thmx}{Theorem}
\newtheorem{lem}[thm]{Lemma}
\newtheorem{prop}[thm]{Proposition}
\newtheorem{cor}[thm]{Corollary}
\newtheorem*{claim}{Claim}
\newtheorem{que}{Question}
\theoremstyle{definition}
\newtheorem{defn}[thm]{Definition}
\newtheorem{rem}[thm]{Remark}
\numberwithin{equation}{section}
\newcommand{\N}{\mathbb{N}}
\newcommand{\Z}{\mathbb{Z}}
\newcommand{\R}{\mathbb{R}}
\newcommand{\orbp}{\mbox{Orb}}
\newcommand{\set}[1]{\left\{#1\right\}}
\newcommand{\eps}{\varepsilon}
\DeclareMathOperator{\Trans}{Trans}
\title{Invariant scrambled sets, uniform rigidity and weak mixing}
\author[M. Fory\'s]{Magdalena Fory\'s}
\address[M. Fory\'s]{AGH University of Science and Technology\\
Faculty of Applied Mathematics\\
al. A. Mickiewicza 30, 30-059 Krak\'ow,
Poland\\ -- and --\\Institute of Computer Science, Faculty of Mathematics and Computer Science, Jagiellonian University,
ul. \L ojasiewicza 6, 30-348 Krak\'ow, Poland}\email{magdalena.forys@uj.edu.pl}
\author{Wen Huang}
\address[W.~Huang]{Department of Mathematics, Sichuan University,
Chengdu, Sichuan 610064, China \\ -- and --\\
School of Mathematical Sciences, University of Science and Technology of
China, Hefei, Anhui 230026, China}
\email{wenh@mail.ustc.edu.cn}
\author[J. Li]{Jian Li}
\address[J. Li]{Department of Mathematics, Shantou University, Shantou, Guangdong, 515063, P.R. China}
\email{lijian09@mail.ustc.edu.cn}
\thanks{Corresponding author: Jian Li (lijian09@mail.ustc.edu.cn)}
\author[P. Oprocha]{Piotr Oprocha}
\address[P. Oprocha]{AGH University of Science and Technology\\
Faculty of Applied Mathematics\\
al. A. Mickiewicza 30, 30-059 Krak\'ow,
Poland\\ -- and --\\Institute of Mathematics\\ Polish Academy of Sciences\\ ul. \'Sniadeckich 8, 00-956 Warszawa, Poland} \email{oprocha@agh.edu.pl}
\date{\today}
\begin{document}
\begin{abstract}
We show that for a non-trivial transitive dynamical system,
it has a dense Mycielski invariant strongly scrambled set if and only if it has a fixed point,
and it has a dense Mycielski invariant $\delta$-scrambled set for some $\delta>0$
if and only if it has a fixed point and not uniformly rigid.
We also provide two methods for the construction of completely scrambled systems
which are weakly mixing, proximal and uniformly rigid.
\end{abstract}
\keywords{Invariant scrambled sets, completely scrambled systems, uniformly rigid,
weak mixing, the Bebutov system}
\subjclass[2010]{54H20, 37B05, 37B20}
\maketitle

\section{Introduction}
The first mathematical treatment of chaotic behavior of a dynamical system appeared in
the work of Li and Yorke in 1975~\cite{LY75}.
A \emph{(topological) dynamical system} is a pair $(X,f)$, where $X$ is a compact metric space with a metric $d$
and $f:X\to X $ is a continuous map.
A subset $S$ of $X$ with at least two points is \emph{scrambled} if for any $x,y\in S$ with $x\neq y$, one has
\begin{align}
\liminf_{n\to\infty}d(f^n(x),f^n(y))&=0
\intertext{and}
\limsup_{n\to\infty} d(f^n(x),f^n(y))&>0.\label{def:scr:c2}
\end{align}
If $X$ contains an uncountable scrambled subset, then
the dynamical system $(X,f)$ is called \emph{chaotic in the sense of Li and Yorke}.
A lot of attention has been paid to the study of scrambled sets.
We refer the reader to~\cite{BHS} for a comprehensive treatment of topological size of scrambled sets.

It is still an open problem whether a Cantor or Mycielski scrambled set can be selected when an
uncountable scrambled set exists in a dynamical system.
But we know that the answer is positive for the following two strengthened definitions of scrambled sets.
If we replace condition \eqref{def:scr:c2} in the definition of scrambled set by the demand that
$(x,y)$ is recurrent in $(X\times X,f\times f)$,
then we obtain the notion of \emph{strongly scrambled set}, and replacing \eqref{def:scr:c2} by
$\limsup_{n\to\infty} d(f^n(x),f^n(y))\geq \delta$
(with some $\delta>0$ depending only on $S$) we obtain the notion of \emph{$\delta$-scrambled set.}
If a dynamical system has an uncountable strongly scrambled set
(resp. an uncountable $\delta$-scrambled set for some $\delta>0$,
then it has a Mycielski strongly scrambled set~\cite{A04} (resp. a Mycielski $\delta$-scrambled set~\cite{BHS}).

Even in the case of compact interval, it is known
that there are some continuous maps having scrambled sets  and at the same time with
zero topological entropy.
It was first observed in~\cite{Du05} that an interval map has positive topological
entropy if and only if some of its iterates possesses an invariant scrambled set.

It is an interesting question, under which conditions a scrambled set can be chosen to be invariant.
The strongest possible situation takes place, when the whole space $X$ can be a scrambled set,
and we call $(X,f)$  \emph{completely scrambled} if this condition holds.
An immediate consequence of~\eqref{def:scr:c2} is that such a dynamical system must be injective and
$f$ is a homeomorphism provided that it is surjective.
In~\cite{HYCS}, Huang and Ye constructed examples of compacta with completely scrambled homeomorphisms.
This allows them to show that for every integer $n\geq 1$
there is a dynamical system on a continuum of topological dimension $n$  which is completely scrambled.
Their examples however are not transitive. They also mention in \cite{HYCS} that an example of completely scrambled
transitive homeomorphism is a consequence of construction of uniformly rigid proximal systems by Katznelson and Weiss~\cite{KW}.
Later in~\cite{HY02}, Huang and Ye showed that
every almost equicontinuous but not minimal system has a completely scrambled factor.
These examples are not weakly mixing, however,
so existence of completely scrambled weakly mixing homeomorphism is left open in~\cite{HYCS}.
In this paper, we will provide two methods for the construction of completely scrambled systems
which are weakly mixing, proximal and uniformly rigid.
The first possible approach is  derived from results of Akin and Glasner~\cite{AG} by
a combination of abstract arguments.
The second method is obtained by modifying the construction of Katznelson and Weiss from~\cite{KW}.
In particular we have the following theorem.

\begin{thmx}\label{thm:A}
There are completely scrambled systems
which are weakly mixing, proximal and uniformly rigid.
\end{thmx}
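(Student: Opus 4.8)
The plan is to reduce Theorem~\ref{thm:A} to a single existence statement and then indicate the two announced constructions.

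\emph{Reduction.} A nondegenerate system $(X,f)$ is completely scrambled exactly when every pair $(x,y)$ with $x\neq y$ is a Li--Yorke pair, i.e.\ when $(X,f)$ is proximal and admits no nontrivial asymptotic pair --- here one uses that, for $x\neq y$, the negation of \eqref{def:scr:c2} is precisely $d(f^n x,f^n y)\to 0$. The key remark is that uniform rigidity by itself forbids nontrivial asymptotic pairs: if $f^{n_k}\to\mathrm{id}$ uniformly and $d(f^n x,f^n y)\to 0$, then
\[
d(x,y)=\lim_{k\to\infty}d(f^{n_k}x,f^{n_k}y)=0 .
\]
Thus a nondegenerate, proximal, uniformly rigid system is automatically completely scrambled (and, being proximal, has a fixed point). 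So it suffices to construct a single nondegenerate system that is at once weakly mixing, proximal and uniformly rigid.

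\emph{Abstract construction (after Akin and Glasner).} I would work inside a Polish ``space of systems'' on which the three target properties can be compared by Baire category --- for instance the closed invariant subsets of a universal Bebutov-type system, or the homeomorphisms of the Hilbert cube (or of a Cantor model) with a complete metric. By the residuality results of~\cite{AG}, weak mixing and uniform rigidity are each residual in a suitable such space. To also pin down proximality, which is a $G_\delta$-type but not a priori dense condition, I would restrict to systems with a prescribed fixed point $x_0$ whose entire dynamics lies above the proximal cell of $x_0$ --- that is, to proximal extensions of the one-point system --- and check that weak mixing and uniform rigidity remain residual there; a Baire category argument then produces a system in the triple intersection. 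The place where~\cite{AG} is really used is the enveloping-semigroup picture of proximal flows, which is what makes proximality a manageable condition on the parameter space.

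\emph{Modifying Katznelson and Weiss.} Alternatively, I would revisit the inductive construction of~\cite{KW}, in which a uniformly rigid (indeed proximal) homeomorphism is assembled stage by stage from finite periodic-like models with periods $p_k\to\infty$, the limit satisfying $f^{p_k}\to\mathrm{id}$ uniformly while all orbits are funnelled toward a single fixed point. I would interleave with these ``rigidity stages'' a sequence of ``mixing stages'': enumerating all quadruples $(U,V,U',V')$ of nonempty basic open sets, one arranges at the relevant stage that some single $n$ satisfies $f^n(U)\cap V\neq\emptyset$ and $f^n(U')\cap V'\neq\emptyset$; in the limit this makes $f\times f$ transitive, hence $(X,f)$ weakly mixing. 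One then verifies that each such mixing move can be carried out inside the current tower without destroying the structure responsible for $f^{p_k}\approx\mathrm{id}$ at the present resolution and without detaching points from the fixed point; this forces a careful alternation --- impose the next rigidity requirement only after the outstanding mixing requirements at the previous resolution are met, and conversely.

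\emph{Main obstacle.} The reduction is soft; the substance is the coexistence of weak mixing with uniform rigidity and proximality, which pull against each other (spreading of orbits across $X$ versus near-return to the identity at fine scales). In the abstract approach this tension is absorbed into choosing a parameter space on which all three properties are residual; in the Katznelson--Weiss approach it becomes explicit bookkeeping for the interleaved induction, and I expect that bookkeeping --- establishing that the limit system exists and simultaneously inherits all three properties --- to be where the real work lies.
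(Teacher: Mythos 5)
Your opening reduction is correct and is indeed the (implicit) soft part of the paper's argument: if $f^{n_k}\to \mathrm{id}$ uniformly then no nontrivial asymptotic pairs can exist, so a nondegenerate proximal uniformly rigid system is automatically completely scrambled, and everything reduces to producing one nontrivial system that is simultaneously weakly mixing, proximal and uniformly rigid.

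The gap is that neither of your two routes actually produces such a system; in both cases the step you defer is the entire content of the theorem. In the abstract route, the assertion that weak mixing and uniform rigidity are ``each residual in a suitable space of systems,'' and remain residual after restricting to proximal extensions of a fixed point, is exactly the hard, unverified claim -- and it is not how \cite{AG} enters the paper's proof. The coexistence of weak mixing with uniform rigidity is delicate (a mildly mixing uniformly rigid system is trivial \cite{GW06,HY04}, and the known constructions such as \cite{GM89} work inside very particular closures of conjugates), so genericity in an unspecified parameter space cannot be taken for granted and may simply fail. The paper instead chains concrete results of \cite{AG}: it starts from a nontrivial almost equicontinuous scattering system $(Y,g)$, shows the enveloping monothetic group $\Lambda_g$ is recurrent but \emph{not totally bounded}, applies Corollary~4.19 of \cite{AG} to obtain a nontrivial weakly mixing system $(Z,h)$ to which $(\Lambda_g,g)$ maps by a continuous homomorphism (whence uniform rigidity of $(Z,h)$), invokes Corollary~4.14(a)(2) of \cite{AG} to see that all minimal subsets of $Z$ are singletons, and finally collapses the closure of the fixed-point set to a single point, so that the quotient has a fixed point as its unique minimal point and is therefore proximal; your device of ``restricting the parameter space to proximal systems'' has no analogue of this quotient step and no argument behind it. Likewise, in the Katznelson--Weiss route you correctly identify the tension and the need to interleave rigidity and mixing requirements, but the ``bookkeeping'' you postpone -- the explicit WK-type sequence with the shift by one position over the flat blocks of value $1$, together with the verifications of uniform rigidity via Lemma~\ref{lem:urec}, of weak mixing via the $n,n+1\in N(U,U)$ criterion of Lemma~\ref{lem:WM}, and of proximality via syndetic occurrence of long blocks of $1$'s and Lemma~\ref{lem:proximal} -- is precisely the proof of Theorem~\ref{thm:main_wm} in the paper. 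As it stands, you have a correct reduction plus two programmes; the existence statement that Theorem~\ref{thm:A} asserts is not established.
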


Note that the uniformly rigid property was introduced by Glasner and Maon in~\cite{GM89} as a topological
analogue of rigidity in ergodic theory.
Recall that a dynamical system $(X,f)$ is \emph{uniformly rigid} if
\[\liminf_{n\to\infty}\sup_{x\in X} d(f^n(x),x)=0.\]
They constructed a class of minimal systems
which are both uniformly rigid and weakly mixing in~\cite{GM89}.
But those systems cannot be completely scrambled.
The dynamical systems in Theorem~\ref{thm:A} are also the first examples on
non-minimal weakly mixing systems with the uniformly rigid property.

Even though we do not know whether a Cantor or Mycielski invariant scrambled set can be selected when an
uncountable invariant scrambled set exists in a dynamical system, we show that
uncountable invariant scrambled sets can be dense in a suitable subsystem.
We also show that the answer is positive for invariant strongly scrambled sets and  invariant $\delta$-scrambled sets.

In 2002, Huang and Ye proved that a non-periodic transitive system with a periodic point contains
an uncountable scrambled sets~\cite{HY02}.
In fact by their proof the scrambled set can be chosen to be a Mycielski strongly scrambled set.
Moreover, if this system also has the sensitive dependence
on initial conditions, then it contains a Mycielski $\delta$-scrambled set for some $\delta>0$~(see~\cite{M14}).

It is natural to ask when a transitive system contains invariant strongly scrambled sets or
invariant $\delta$-scrambled sets.
First, it is easy to see that if  a dynamical system contains an invariant scrambled set, then it must contain
a fixed point. We show that this condition is also sufficient for transitive systems.

\begin{thmx}\label{thm:B}
If a non-trivial dynamical system $(X,f)$ is transitive,
then $(X,f)$ contains a dense Mycielski invariant strongly scrambled set
if and only if it has a fixed point.
\end{thmx}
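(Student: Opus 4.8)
The "only if" direction is immediate: if $S$ is an invariant scrambled set, pick any $x \in S$; by the $\liminf$ condition applied to the pair $(x, f(x))$ (using that $f(x) \in S$ and, if $f(x)=x$ we are done), one sees that $\omega(x)$ must contain a point at which the orbit accumulates onto itself, and a short argument forces a fixed point. Actually the cleanest route is: the closure $\overline{S}$ is $f$-invariant and the restriction has a minimal subset; an invariant scrambled set cannot contain two distinct minimal points (they would be distal or else Li–Yorke pairs are not recurrent in the wrong way), so the minimal subset is a single point, i.e. a fixed point. I would spell this out in one short paragraph.

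For the "if" direction, assume $(X,f)$ is transitive, non-trivial, with a fixed point $p$. The plan is to build the scrambled set via a Mycielski-type (Kuratowski–Mycielski) scheme inside a Cantor set, forcing invariance by working not with a Cantor set directly but with its orbit closure. Concretely: first I would use transitivity plus the fixed point to produce, for each $n$, many points whose orbit comes very close to $p$ (so $\liminf d(f^k(x), p) = 0$ along a subsequence), which will give the proximal-to-$p$ behavior needed for the $\liminf$ part of the scrambled condition. The key structural input is that the proximal cell of $p$, i.e. $\{x : \liminf_n d(f^n(x), p) = 0\}$, is a dense $G_\delta$ set (dense by transitivity, $G_\delta$ by a standard Baire-category argument), and moreover one can arrange a dense $G_\delta$ set of points $x$ for which the pair $(x, p)$ is additionally recurrent in $X \times X$ — this is where "strongly scrambled" enters and is the step requiring care.

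The core construction then runs as follows. Build by induction a sequence of finite sets $F_1 \subset F_2 \subset \cdots$ together with a shrinking system of open sets, so that (i) each pair of distinct points that will end up in the limiting Cantor set $K$ is, along a common subsequence of times, simultaneously pushed close to $p$ (giving $\liminf = 0$ for every pair), while (ii) along another sequence of times the pair is pushed apart by a definite amount using transitivity (giving $\limsup > 0$), and (iii) the relevant orbit-pairs return near themselves infinitely often (recurrence in $X \times X$). Then let $S = \bigcup_{k \geq 0} f^k(K)$; this is automatically invariant (forward invariant; if $f$ is not onto one takes the forward orbit, which suffices since the scrambled conditions are tail conditions), it is a countable union of Cantor sets hence Mycielski, and density of $S$ follows from choosing the initial pieces of $K$ inside a prescribed countable dense set of open sets. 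One must check that adding the fixed point $p$ and the extra orbit points $f^k(K)$ does not destroy the scrambled property: pairs of the form $(f^i(x), f^j(y))$ with $x,y \in K$ inherit $\liminf = 0$ because both coordinates are eventually proximal to $p$, and they inherit $\limsup > 0$ and recurrence from the corresponding properties arranged for $(x,y)$, shifted by the fixed time $|i-j|$; pairs $(z, p)$ with $z \in S$ are handled because $z$ is proximal to $p$ (giving $\liminf = 0$) yet $z$ is not eventually equal to $p$, and recurrence of $(z,p)$ is arranged directly.

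The main obstacle, and the part I expect to absorb most of the work, is simultaneously guaranteeing recurrence of all the pairs in $X \times X$ (the "strongly" in strongly scrambled) together with invariance and density; the standard Mycielski machinery gives $\liminf/\limsup$ control easily via transitivity, but recurrence of a whole Cantor set of pairs must be engineered into the inductive construction — typically by interleaving, at stage $n$, a block of time during which the current finite configuration is steered back $\eps_n$-close to a previously recorded configuration, using transitivity of $(X \times X, f \times f)$ on the relevant product open sets and using that $p$ is a fixed point to "park" coordinates near $p$ while manoeuvring the others. Getting these three requirements — recurrence, invariance (closure under the forward orbit), and density — to coexist without conflict is the technical heart of the argument.
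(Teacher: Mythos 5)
Your ``if'' direction is not yet a proof: you describe an inductive Mycielski-type construction and then explicitly leave its central difficulty unresolved (making \emph{every} pair of the limiting Cantor set recurrent in $X\times X$ while also forcing invariance and density). That difficulty is real and, under mere transitivity, your sketch does not go through as described: transitivity lets you steer \emph{one} open set at a time, so pushing a whole finite configuration of pairwise disjoint open sets simultaneously close to $p$ (or back close to a recorded configuration) is not available, and the ``parking near the fixed point'' trick you invoke has a circular bookkeeping problem --- you must decide how long to park a coordinate before you know at what time transitivity will bring the next coordinate in, and $N(U,V)$ need not be syndetic. The paper avoids the induction entirely with a much simpler observation: if $x$ is a transitive point and $p$ a fixed point, then $\orbp(x,f)$ is \emph{already} a dense invariant strongly scrambled set, because choosing $k_i$ with $f^{k_i}(x)\to p$ gives $f^{k_i}(f^m(x))=f^m(f^{k_i}(x))\to f^m(p)=p$ for every $m$ simultaneously (proximality of all orbit pairs), and choosing $k_i$ with $f^{k_i}(x)\to x$ (recurrence of $x$, valid since $X$ is perfect) gives product-recurrence of all orbit pairs; the single orbit synchronizes everything through continuity of the finitely many maps $f^m$. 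The upgrade to a dense Mycielski invariant set is then done once and for all by the paper's Lemma 4.3: the relations ``all shifted pairs $(f^m(x),f^n(y))$ are proximal and product-recurrent'' are $G_\delta$, they contain $K\times K$ off the diagonal for the dense orbit $K$ (hence are residual), so Mycielski's theorem yields a dense Mycielski set $M$, and $S=\bigcup_{i\ge 0}f^i(M)$ is invariant, still strongly scrambled by the shift-closed form of the relations, and still Mycielski because $f|_S$ is injective. This residuality-for-free step, supplied by the scrambled orbit, is exactly the idea missing from your plan.

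Two smaller points. Your separation requirement (ii) is unnecessary: for distinct $x,y$, product-recurrence already gives $\limsup_n d(f^n(x),f^n(y))\ge d(x,y)>0$. For the ``only if'' direction, your first argument is the right one and is the paper's Remark 3.1 (if $x\neq f(x)$ lie in an invariant scrambled set, the proximal pair $(x,f(x))$ yields, along a subsequence with $f^{n_i}(x)\to p$ and $d(f^{n_i}(x),f^{n_i+1}(x))\to 0$, a point $p$ with $f(p)=p$); but the route you call ``cleanest'' is flawed as stated, since a minimal subset of $\overline{S}$ need not meet $S$, so scrambledness of $S$ says nothing about pairs of minimal points in the closure.
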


Note that $\delta$-scrambled sets are never compact and invariant at the same time~\cite{BHS}.
It is shown in~\cite{BGO10} that every non-trivial topologically mixing map with at least one fixed point contains invariant $\delta$-scrambled sets.
They also conjecture in \cite{BGO10} that there exists a weakly mixing system with a fixed point
but without invariant $\delta$-scrambled sets.

We find that the existence of invariant $\delta$-scrambled sets is related to the uniformly rigid property.
We show that a necessary condition for a dynamical system
possessing invariant $\delta$-scrambled sets for some $\delta>0$  is not uniformly rigid,
and this condition (with a fixed point) is also sufficient for transitive dynamical systems.

\begin{thmx}\label{thm:C}
If a non-trivial dynamical system $(X,f)$ is transitive, then $(X,f)$ contains
a dense Mycielski invariant $\delta$-scrambled set for some $\delta>0$
if and only if it has a fixed point and is not uniformly rigid.
\end{thmx}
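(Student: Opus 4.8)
The plan is to prove the two implications separately. For the forward (necessity) direction, suppose $(X,f)$ contains a dense Mycielski invariant $\delta$-scrambled set $S$ for some $\delta>0$. Transitivity is not even needed here: one only needs that $X$ is nontrivial, so that $S$ contains two distinct points $x,y$. Since $S$ is invariant, $f^n(x),f^n(y)\in S$ for all $n$, so $(f^n(x),f^n(y))$ is again a pair in the $\delta$-scrambled set and hence $\limsup_{n\to\infty}d(f^{n+m}(x),f^{n+m}(y))\ge\delta$ for every $m\ge 0$; combined with $\liminf_{n\to\infty}d(f^n(x),f^n(y))=0$ this forces, for infinitely many $n$, that $d(f^n(x),f^n(y))<\delta/2$ while $d(f^{n+k}(x),f^{n+k}(y))\ge\delta$ for some $k=k(n)$. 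If $(X,f)$ were uniformly rigid, pick $N$ with $\sup_{z\in X}d(f^N(z),z)<\delta/3$; then for $n$ with $d(f^n(x),f^n(y))<\delta/3$ one gets $d(f^{n+jN}(x),f^{n+jN}(y))<\delta$ for all $j\ge 1$ by iterating the triangle inequality, so along the (syndetic, since $N$ works and so do its multiples) set of times that are $\equiv n\pmod N$ past $n$, the orbit pair never separates by $\delta$. Making this quantitative — one shows that for every $\eps>0$ the set of $N$ with $\sup_z d(f^N z,z)<\eps$ is such that infinitely many of them lie in any prescribed arithmetic progression — yields a contradiction with $\limsup\ge\delta$. (An alternative, cleaner route: the existence of a $\delta$-scrambled pair with $x,y$ in an $f$-invariant set contradicts the fact that uniform rigidity implies the orbit closure of $(x,y)$ meets the diagonal along a sufficiently "thick" set of times to swallow any $\limsup$; this is essentially the argument of \cite{BHS} that $\delta$-scrambled invariant sets cannot be compact, adapted to the rigid setting.)

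For the reverse (sufficiency) direction, assume $(X,f)$ is transitive, nontrivial, has a fixed point $p$, and is not uniformly rigid. The first step is to extract a quantitative failure of uniform rigidity: there is $\delta_0>0$ such that $\limsup_{n\to\infty}\sup_{x\in X}d(f^n(x),x)\ge 2\delta_0$, equivalently for every $N$ there are $n\ge N$ and $x_n\in X$ with $d(f^n(x_n),x_n)\ge 2\delta_0$. The strategy is then to run the Mycielski-type construction that already underlies Theorem B — building the scrambled set as $S=\bigcup_k C_k$ where each $C_k$ is a Cantor set obtained by a Kuratowski–Mycielski / Banach–Mazur scheme over the product system $(X^{[m]},f^{[m]})$ — but to enforce the extra $\delta$-condition at every stage. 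Concretely, transitivity gives a transitive point and lets us approximate, within any finite configuration of small balls, an orbit segment that first contracts all marked pairs close to $p$ (using that $p$ is a fixed point, so we can park points near $p$ for a long time to realize the $\liminf=0$ requirement) and then, invoking the quantitative non-rigidity, applies a power $f^n$ that moves some point (hence, after a short transitivity correction, moves one of our marked points) a distance $\ge\delta_0$ away from where it started while the partner stays put — forcing separation $\ge\delta_0/2=:\delta$. Interleaving "contraction phases" near $p$ with "separation phases" supplied by non-rigidity, and taking $S$ invariant by replacing the constructed set with $\bigcup_{n\ge 0}f^{-n}(S_0)\cap(\text{the part of orbit staying in a transitive subsystem})$ — or, as in Theorem B, by directly building an invariant Cantor scheme using that the fixed point allows the backward orbits to be organized — gives a dense Mycielski invariant set that is $\delta$-scrambled.

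The main obstacle is the sufficiency direction, and within it the coordination of three competing demands: invariance, density (hence a transitivity/Baire-category argument over $X^{[m]}$ for all $m$ simultaneously), and the uniform lower bound $\delta$ on the $\limsup$. Density and the $\liminf=0$ part are handled exactly as in the proof of Theorem B by a Kuratowski–Mycielski argument anchored at the fixed point $p$; the new ingredient is that the $\limsup\ge\delta$ requirement must survive the invariance construction, and the only leverage for it is the quantitative non-rigidity, which a priori says a point moves by $\delta_0$ but not that \emph{our} marked point does. Bridging this gap — transferring "some point moves far" to "a point we control moves far, while its scrambled partner does not" — is where transitivity must be used carefully: one uses that the set of $n$ realizing $d(f^n x_n,x_n)\ge2\delta_0$ is infinite and that transitivity lets us choose the $m$-tuples in the Cantor scheme to shadow, for arbitrarily long times, an orbit of the transitive point that (i) passes near $p$ and (ii) subsequently witnesses, via continuity of $f^n$ on a ball around $x_n$, a displacement $\ge\delta_0$. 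One must also check that $\delta$-scrambledness is preserved under taking the (countable, increasing) union of the $C_k$ and under the invariant closure, which is routine once the per-stage estimates are uniform in $k$. I expect the write-up to hinge on a clean lemma of the form: \emph{a transitive, non-uniformly-rigid system admits, for every finite open cover by small balls and every target diameter, an orbit segment of a transitive point that contracts the marked points to within any prescribed $\eps$ of $p$ and later expands one marked pair to within $\delta$ of the failure-of-rigidity constant}, after which the construction is a standard (if technical) Mycielski scheme.
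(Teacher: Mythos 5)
Your necessity argument has a genuine gap. The triangle-inequality step is simply wrong: uniform rigidity gives $d(f^N(z),z)<\delta/3$ for \emph{all} $z$, so in passing from time $n+(j-1)N$ to time $n+jN$ the distance between the two orbit points can grow by up to $2\delta/3$; the errors accumulate and you do not get $d(f^{n+jN}(x),f^{n+jN}(y))<\delta$ for all $j\geq 1$. The proposed repair (that the rigidity times hit every arithmetic progression infinitely often) is unsupported and not obviously true. More fundamentally, no argument that uses only one scrambled pair $(x,y)$ and its forward images $(f^nx,f^ny)$ can work: in a uniformly rigid, proximal system (e.g.\ the systems of Theorem~\ref{thm:A}) any pair with $d(x,y)\geq\delta$ is proximal and, since $f^{n_k}\to \mathrm{id}$ uniformly forces $\limsup_k d(f^k(u),f^k(v))\geq d(u,v)$, every forward image of the pair still has $\limsup\geq\delta$ — so all the data you invoke are compatible with uniform rigidity. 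What must be exploited is invariance with \emph{varying time offsets}: pick a non-periodic $z$ in the invariant $\delta$-scrambled set $S$; for each $n\geq 1$ the pair $(z,f^n(z))$ lies in $S$ and is $\delta$-scrambled, so there is $k$ with $d(f^k(z),f^k(f^n(z)))\geq\delta$, i.e.\ the point $x_n=f^k(z)$ satisfies $d(f^n(x_n),x_n)\geq\delta$, contradicting uniform rigidity at once (this is Lemma~\ref{lem:invariant-delta}). You also never address the other half of necessity, that an invariant scrambled set forces a fixed point (proximality of $(x,f(x))$ yields a fixed point in $\omega(x,f)$, Remark~\ref{rem:invariant-s-s}); it is easy, but it is part of the statement.

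For sufficiency your sketch leaves exactly its central difficulty unresolved — as you acknowledge, you never bridge ``some point moves $\geq\delta$ under $f^q$'' to ``our marked pair separates by $\delta$,'' and the hoped-for shadowing lemma is neither stated precisely nor proved; the heavy Cantor-scheme machinery is also unnecessary. The actual argument is short. Non-rigidity (plus $f^n\neq\mathrm{id}$ for all $n$) gives $\delta>0$ and, for every $q\geq1$, a point $x_q$ with $d(f^q(x_q),x_q)\geq\delta$. Take a transitive point $x$ and fix $m>n$ with $q=m-n$. The key observation is that the two points of the pair lie on the \emph{same} orbit with offset $q$: since $f^n(x)$ is transitive, $f^{k_i}(f^n(x))\to x_q$ along some sequence, hence $f^{k_i}(f^m(x))=f^q\bigl(f^{k_i}(f^n(x))\bigr)\to f^q(x_q)$, so $\limsup_k d(f^k(f^m(x)),f^k(f^n(x)))\geq d(f^q(x_q),x_q)\geq\delta$; proximality of the pair follows from the fixed point exactly as in Theorem~\ref{thm:B}. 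Thus the forward orbit of $x$ is already a dense invariant $\delta$-scrambled set, and Lemma~\ref{lem:dense-inv-scrambled-set}(2) (a Mycielski-theorem argument with the $G_\delta$ relations $R',T',P,Q$, followed by taking $S=\bigcup_{i\geq0}f^i(M)$) upgrades it to a dense Mycielski invariant $\delta$-scrambled set. Without this transfer device your construction cannot guarantee the uniform $\limsup\geq\delta$ for the pairs you build, so the proposal as written does not establish the theorem.
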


As a byproduct of Theorem~\ref{thm:C}, the above mentioned conjecture from~\cite{BGO10} has an affirmative answer,
because in Theorem~\ref{thm:A} we construct a weakly mixing, proximal and uniformly rigid system with a fixed point but without invariant $\delta$-scrambled set.

The notion of mild mixing was first introduced in ergodic theory by Furstenberg and Weiss in~\cite{FG78}.
An ergodic system is \emph{mildly mixing} if it has no non-trivial rigid factor.
It is shown in~\cite{FG78} that mildly mixing systems are disjoint from rigid systems, and
mild mixing property is equivalent to the following multiplier property. Roughly speaking,
an ergodic system is mildly mixing if and only if for every
ergodic (finite or infinite) measure preserving system, the product of these two system is ergodic.
The topological
analogue of mild mixing in ergodic theory was introduced by Glasner and Weiss~\cite{GW06}
and independently by Huang and Ye~\cite{HY04}.
A dynamical system $(X,f)$ is \emph{mildly mixing} if for every transitive system $(Y,g)$
the product system $(X\times Y,f\times g)$ is transitive.
It is clear that every strongly mixing system is mildly mixing, but
there exists a mildly mixing system which is not strongly mixing~\cite{HY04}.
In~\cite{GM89} the existence of minimal weakly mixing and uniformly rigid
dynamical systems is demonstrated.
However, a dynamical system which is both mildly mixing and uniformly rigid
must be trivial~\cite{GW06,HY04}.
Then a directly corollary of Theorem~\ref{thm:C} is that
each non-trivial mildly mixing system has a dense Mycielski invariant $\delta$-scrambled set
if and only if it has a fixed point.

\section{Preliminaries}

Denote by $\N$ the set of all positive integers and we denote the set of all non-negative integers by $\N_0=\N\cup \set{0}$.
A subset $A$ of $\N$ is \emph{syndetic} if there is $k>0$ such that $[i,i+k]\cap A\neq \emptyset$ for every $i\in \N$.

\subsection{Subsets in compact metric spaces}
Let $(X,d)$ be a compact metric space.
The \emph{diagonal} of $X\times X$ is denoted by
$\Delta_X=\set{(x,x) : x\in X}$ (when the set $X$ is clear from the context we simply write $\Delta$).
A subset $A$ of $X$ is \emph{non-trivial} if it consists of at least two points.
If a subset $A$ of $X$ can be presented as intersection of
countable many open sets then we call it $G_\delta$ and $A$ is \emph{residual} if it is $G_\delta$ and dense in $X$.
A non-empty subset of $X$ is: \emph{perfect} if it is closed and has no isolated points
(each open subset contains at least two points);
\emph{Cantor} if it is perfect and zero dimensional (has a base of the topology consisting of sets which are both closed and open);
\emph{Mycielski} if it can be presented as a countable union of Cantor sets.
For convenience, we state here a simplified version of Mycielski theorem~\cite[Theorem~1]{M64} which we shall use.

\begin{thm}[Mycielski Theorem]
Let $X$ be a perfect compact metric space.
If $R$ is a residual subset of $X\times X$,
then there exists a dense Mycielski set $M\subset X$ such that $M\times M\subset R\cup\Delta$.
\end{thm}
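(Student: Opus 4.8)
The plan is to run a fusion (Cantor--scheme) argument driven by the Baire category theorem. First I would fix a decreasing sequence $G_1\supseteq G_2\supseteq\cdots$ of open dense subsets of $X\times X$ with $\bigcap_{k}G_k\subseteq R$, and a countable base $\{B_i\}_{i\in\N}$ of nonempty open subsets of $X$. A single Cantor set is closed, so it cannot be dense unless $X$ is zero-dimensional; hence the set $M$ must be produced as a genuinely infinite union $M=\bigcup_{i\in\N}C_i$ of Cantor sets with $C_i\subseteq B_i$, and the whole difficulty is to build all the $C_i$ \emph{at once}, so that in addition every pair of distinct points taken from $\bigcup_i C_i$ lies in $R$.

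I would organize the construction along the tree $T$ whose root has countably many children $(i)$, $i\in\N$, and in which each node $(i)\frown s$ with $s\in\{0,1\}^{<\omega}$ has exactly two children $(i)\frown s\frown 0$ and $(i)\frown s\frown 1$; let $\mathrm{depth}(v)$ denote the length of $v$. By recursion along a dovetailed enumeration of ``tasks'', for every $v\in T$ I would produce a decreasing sequence of nonempty open sets whose closures shrink to a nonempty compact ``node set'' $K_v$, maintaining: (i) $\overline{U_{v\frown a}}\subseteq U_v$ at every stage, so $K_{v\frown a}\subseteq K_v$; (ii) the two children of a node have disjoint node sets; (iii) $\diam K_v\le 2^{-\mathrm{depth}(v)}$; (iv) $K_{(i)}\subseteq B_i$; and, crucially, (v) for any two \emph{distinct} nodes $v,w$ of the same depth $d$ one has $K_v\times K_w\subseteq G_d$. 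The tasks are of two kinds: ``activate a node $v$'', i.e.\ choose an initial nonempty open $U_v$ of diameter $\le 2^{-\mathrm{depth}(v)}$ with $\overline{U_v}$ inside the current set of its parent and disjoint from the closures of already activated siblings (and contained in $B_i$ when $v=(i)$); and ``enforce (v) for a pair $(v,w)$'', i.e.\ shrink $U_v$, then $U_w$, then all already activated descendants of $v$ and of $w$, so as to restore (i)--(iii) while arranging $\overline{U_v}\times\overline{U_w}\subseteq G_d$. Each such move is possible because $X$ is perfect --- every nonempty open set contains a nonempty open set whose closure is proper and of arbitrarily small diameter --- and because each $G_d$ is open and dense, so $G_d\cap(U_v\times U_w)$ contains a product of two nonempty open sets; and since all later modifications only shrink sets, no instance of (i), (ii) or (v) already secured is ever lost.

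Granting such a scheme, the rest is routine. For a branch $\alpha$ of $T$ through $(i)$, the node sets $K_{\alpha|1}\supseteq K_{\alpha|2}\supseteq\cdots$ are nonempty compact with $\diam\to 0$, so their intersection is a single point $x_\alpha\in K_{(i)}\subseteq B_i$; set $C_i=\{x_\alpha:\alpha\text{ a branch of }T\text{ extending }(i)\}$. By (ii), distinct branches give distinct points (already at the first level where the two branches split, their node sets are disjoint), so $C_i$ is a Cantor set, and $M=\bigcup_{i}C_i$ is a dense Mycielski set because $\emptyset\neq C_i\subseteq B_i$ for all $i$ and $\{B_i\}$ is a base. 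Finally, take $x\neq y$ in $M$, say $x=x_\alpha$, $y=x_\beta$ with $\alpha\neq\beta$, and let $d_0$ be the first depth at which $\alpha$ and $\beta$ differ. For every $d\ge d_0$ the nodes $\alpha|d$ and $\beta|d$ are distinct, so (v) gives $(x,y)\in K_{\alpha|d}\times K_{\beta|d}\subseteq G_d$; hence $(x,y)\in\bigcap_{d\ge d_0}G_d=\bigcap_{k}G_k\subseteq R$, using that the $G_k$ are decreasing. Thus $M\times M\subseteq R\cup\Delta$.

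I expect the genuine obstacle to be the bookkeeping in the recursion rather than the point-set topology. Each node of $T$ occurs in infinitely many pairs, and its finitely many ancestors are themselves re-chosen infinitely often, so no $U_v$ can be finalized in finitely many steps; the argument must be set up as an honest fusion, with the enumeration of tasks arranged so that (a) every node is activated before any pair-task mentioning it, (b) every pair of equal-depth nodes is eventually processed, and (c) at each finite stage only finitely many nodes are active, so that ``shrink $U_v$ and readjust all active descendants'' is a legitimate finite operation. Checking that these demands are jointly satisfiable, and that the resulting decreasing sequences of open sets really do converge to compact node sets with properties (i)--(v), is the one delicate point; everything else is standard.
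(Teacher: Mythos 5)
The paper offers no proof of this statement to compare against: it is stated as a ``simplified version of Mycielski theorem'' and attributed to \cite{M64}, so it is used as a black box. Your argument is the standard simultaneous Cantor-scheme (fusion) proof of the dense Kuratowski--Mycielski theorem, and it is essentially correct: writing $\bigcap_k G_k\subseteq R$ with $G_k$ open, dense and decreasing, growing one binary scheme inside each basic open set $B_i$, and securing for every pair of distinct same-depth nodes that the product of their node sets lies in $G_d$ ($d$ the depth), does produce Cantor sets $C_i\subseteq B_i$ whose union $M$ is dense and satisfies $M\times M\subseteq\bigl(\bigcap_k G_k\bigr)\cup\Delta\subseteq R\cup\Delta$; the limiting step (nested nonempty compacta of vanishing diameter, injectivity of the branch map from disjointness at the first splitting level, continuity from the diameter bound) is sound, and your use of the monotonicity of the $G_k$ in the final inclusion is exactly what makes the ``first splitting depth $d_0$'' argument work. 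Two points should be made explicit to close the bookkeeping you flag: (a) when activating a first-level node $(i)$, the finitely many previously activated sibling closures could a priori cover $B_i$, so you must either first shrink some of those siblings (together with their finitely many active descendants) away from a chosen point of $B_i$ --- legitimate in your scheme, and possible because $X$ is perfect, so every nonempty open set contains a nonempty open subset whose closure misses any prescribed point --- or maintain an invariant that leaves room; (b) since each $U_v$ is shrunk infinitely often, the node set must be defined as $K_v=\bigcap_n\overline{U_v^{(n)}}$, nonempty by compactness of the nested closures, which is what your conditions (i)--(v) indeed refer to in the limit. With these spelled out, your proof is complete and is the expected proof of the cited theorem.
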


Let $A$ be an uncountable subset of $X$.
A point $x\in X$ is a \emph{condensation point} of $A$
if for every neighborhood $U$ of $x$, $U\cap A$ is uncountable.
Denote the set of all condensation points of $A$ by $A^*$. It is known (e.g. see \cite[\S23]{KurVolI}) that $A^*$ is closed, $(A\cap A^*)^*=A^*$ and
$A\setminus A^*$ is at most countable.
For every $p\in A^*$ and every neighborhood $U$ of $p$, the set $A\cap U$ is uncountable
and $A\setminus A^*$ is countable, hence $A^* \cap U$ is also uncountable, in particular $A^*$ is perfect.

\subsection{Topological dynamics}
A \emph{(topological) dynamical system} is a pair $(X,f)$ consisting of a compact metric space $(X,d)$
and a continuous map $f\colon X\to X$.
We say that $x\in X$ is a \emph{fixed point of $(X, f)$} if $f (x)= x$;
a \emph{periodic point of $(X, f)$} if $f^n (x)= x$ for some $n\in \N$; a \emph{recurrent point of $(X, f)$} if
$\liminf_{n\to\infty}d(f^n(x),x)=0$.
We denote the \emph{(positive) orbit} of $x$ by $\orbp(x,f)=\set{x,f(x),f^2(x),\ldots}$.
The \emph{$\omega$-limit set of $x$} is defined by $\omega(x,f)=\bigcap_{n=1}^\infty \overline{\orbp(f^n(x),f)}$.
Note that a point $x\in X$ is recurrent if and only if $x\in \omega(x,f)$.
A dynamical system $(X,f)$ is \emph{pointwise recurrent} if every $x\in X$ is recurrent.

A subset $D \subset X$ is \emph{$f$-invariant} (or simply \emph{invariant}) if $f(D)\subset D$.
A non-empty closed invariant subset $D$ of $X$ is \emph{minimal}, if $\overline{\orbp(x,f)} = D$ for every $x \in D$.
A point $x \in  X$ is \emph{minimal} if it is contained in a minimal subset of $X$.

For $x\in X$ and $U,V\subset X$, we write $N(x,U)=\set{n\in\N: f^n(x)\in U}$ and
$N(U,V)=\set{n\in \N: f^n(U)\cap V\neq\emptyset}$.
A dynamical system $(X,f)$ is \emph{transitive} if $N(U,V)\neq\emptyset$
for any non-empty open sets $U,V\subset X$; \emph{weakly mixing} if $(X\times X, f\times f)$ is transitive;
\emph{strongly mixing} if $N(U,V)$ is cofinite  for any non-empty open sets $U,V\subset X$.
A point $x\in X$ is a \emph{transitive point} if its orbit $\orbp(x,f)$ is dense in $X$.
It is well known that for a transitive system $(X,f)$, the collection of all transitive points forms
a dense $G_\delta$ subset of $X$.

Next lemma provides an accessible tool for a construction of weakly mixing systems
(see \cite[Lemma~5.1]{HY05} or \cite[Lemma~3.8]{OprWM}).

\begin{lem}\label{lem:WM}
Assume that $(X,f)$ is transitive and that $x\in X$ has a dense orbit. Then $(X,f)$ is weakly mixing if and only if
for any open neighborhood $U$ of $x$ there is $n>0$ such that $n,n+1\in N(U,U)$.
\end{lem}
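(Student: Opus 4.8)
The forward implication is the easy one. If $X$ is a single point both conditions hold trivially, so assume $X$ is non-trivial and $(X,f)$ weakly mixing; then $X$ has no isolated points (an isolated point would be a non-periodic transitive point, hence outside $f^n(X)$ for every $n\geq1$, contradicting transitivity of $X\times X$), so $f(X)\supseteq\orbp(x,f)\setminus\{x\}$ is dense and $f^{-1}(U)$ is non-empty and open for every neighborhood $U$ of $x$. Applying transitivity of $(X\times X,f\times f)$ to the open sets $U\times U$ and $U\times f^{-1}(U)$ produces $n\in N(U,U)\cap N(U,f^{-1}(U))$, and $f^n(U)\cap f^{-1}(U)\neq\emptyset$ unwinds to $f^{n+1}(U)\cap U\neq\emptyset$, i.e.\ $n+1\in N(U,U)$. (Alternatively one may simply invoke the Furstenberg intersection lemma, by which weak mixing makes every $N(U,V)$ thick.)

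For the converse I would proceed in three steps. \emph{Step 1 (from $x$ to all open sets):} given non-empty open $W$, pick $j$ with $f^j(x)\in W$ by density of $\orbp(x,f)$, apply the hypothesis to the neighborhood $f^{-j}(W)$ of $x$, and transport the witnesses forward by $f^j$ to get $n$ with $n,n+1\in N(W,W)$. \emph{Step 2 (thickness):} show that $N(W,W)$ is thick — contains arbitrarily long blocks of consecutive integers — for every non-empty open $W$. \emph{Step 3:} deduce that $(X\times X,f\times f)$ is transitive.

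Step 2 is the crux, and the obstacle is that "$n,n+1\in N(W,W)$" is witnessed by possibly different points of $W$, so runs of return times cannot be obtained by intersecting preimages. I would handle it by iterating a \emph{chaining} construction. For a non-empty open $V\subseteq W$, call $e$ a \emph{good exponent} if $f^e(V)\subseteq W$; for $V=W$ the exponent $0$ is good. Given $V$ with a set of good exponents, use Step 1 to choose $n$ with $V_n:=V\cap f^{-n}(V)$ and $V_{n+1}:=V\cap f^{-(n+1)}(V)$ non-empty, use transitivity of $(X,f)$ to choose $\ell$ with $f^\ell(V_n)\cap V_{n+1}\neq\emptyset$, and set $V':=V_n\cap f^{-\ell}(V_{n+1})$, a non-empty open subset of $V$. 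A short check shows $f^e(V')\subseteq W$ for $e\in\{0,n,\ell,\ell+n+1\}$ (compose $f^e$ with the good exponents already available for $V$), so the difference set of the good exponents of $V'$ contains both $n$ and $n+1$. Iterating $p$ times then yields a non-empty open $V^{(p)}$ whose good-exponent set $E$ satisfies $E-E\supseteq\{0,n_1,n_1+1\}+\cdots+\{0,n_p,n_p+1\}$ for suitable $n_i\geq1$, since good exponents compose as Minkowski sums. An elementary estimate shows any such sumset contains an interval of length $\geq p+1$ (adding the summand $\{0,n_i,n_i+1\}$ lengthens the longest run by one). Finally, for any $t\in N(V^{(p)},V^{(p)})$ and good exponents $e_i\leq e_j$ of $V^{(p)}$: if $z=f^t(w)$ with $w,z\in V^{(p)}$, then $f^{t+e_j-e_i}\!\big(f^{e_i}(w)\big)=f^{e_j}(z)\in W$ while $f^{e_i}(w)\in W$, so $t+(e_j-e_i)\in N(W,W)$; letting $e_j-e_i$ range over an interval of length $p+1$ inside the sumset above exhibits a block of $p+1$ consecutive integers in $N(W,W)$, and $p$ is arbitrary.

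For Step 3, given non-empty open $A_1,A_2,B_1,B_2$, use transitivity to choose $a_1\in N(A_1,B_1)$, $a_2\in N(A_2,B_2)$, then $c\in N\!\big(A_1\cap f^{-a_1}(B_1),\,A_2\cap f^{-a_2}(B_2)\big)$, and set
\[
C:=A_1\cap f^{-a_1}(B_1)\cap f^{-c}\!\big(A_2\cap f^{-a_2}(B_2)\big),
\]
which is non-empty and open. One checks directly that $a_1+j\in N(A_1,B_1)$ and $a_2+j\in N(A_2,B_2)$ for every $j\in N(C,C)$ (push the relation $z=f^j(w)$, $w,z\in C$, through the respective landing conditions). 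Since $N(C,C)$ is thick by Step 2, its difference set is all of $\Z$, so there are $j,j'\in N(C,C)$ with $j-j'=a_2-a_1$; then $m:=a_1+j=a_2+j'$ lies in $N(A_1,B_1)\cap N(A_2,B_2)$, proving $X\times X$ transitive, hence $(X,f)$ weakly mixing.
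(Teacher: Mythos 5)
Your argument is correct, but note that the paper does not actually prove this lemma: it is quoted from \cite[Lemma~5.1]{HY05} and \cite[Lemma~3.8]{OprWM}, so there is no in-paper proof to compare with. Relative to the arguments in those references, which go through the standard characterization of weak mixing via Furstenberg-type intersection properties (all sets $N(U,V)$ thick, fed by return times of the transitive point), your route is more self-contained and elementary: Step~1 transports the hypothesis from neighborhoods of $x$ to arbitrary non-empty open sets; Step~2 manufactures thickness of $N(W,W)$ by hand, via the chaining construction in which the good-exponent sets compose as Minkowski sums, each factor having difference set containing $\{0,n_i,n_i+1\}$ and hence lengthening the longest run of consecutive integers by one; Step~3 converts thick return-time sets into transitivity of $X\times X$ by the difference-set trick $a_1+j=a_2+j'$. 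I checked the key computations --- the four good exponents $0,n,\ell,\ell+n+1$ of $V'$, the identity $(A+B)-(A+B)=(A-A)+(B-B)$ together with the fact that every element of the sumset is realized as a nonnegative difference of good exponents, and the verification that $j\in N(C,C)$ forces $a_1+j\in N(A_1,B_1)$ and $a_2+j\in N(A_2,B_2)$ --- and they are sound. What your approach buys is a proof using nothing beyond the definition of transitivity; what it costs is length compared with simply invoking the thickness characterization of weak mixing, which is how the cited sources proceed.

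One cosmetic slip in the forward direction: your parenthetical justification that a non-trivial weakly mixing system has no isolated points is off. With the paper's definition of transitivity, an isolated point $z$ gives $N(\{z\},\{z\})\neq\emptyset$, so $z$ is in fact \emph{periodic} (not non-periodic, as you assert), whence $X$ is a finite periodic orbit, which non-trivial weak mixing rules out. Alternatively, your appeal to the Furstenberg intersection lemma (thickness of every $N(U,U)$) disposes of this direction at once, so this is a one-line fix rather than a gap.
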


We will use the following two results which were proved in~\cite[Lemma 2.2 and Theorem 4.3]{HYBeb}.

\begin{lem}\label{lem:proximal}
Let $(X,f)$ be a transitive system and $x\in X$ be a transitive point.
If $A$ is a closed invariant subset of $ X$ such that $N(x,U)$ is syndetic for each neighborhood of $A$, then each minimial set of $(X,f)$ is contained in $A$.
\end{lem}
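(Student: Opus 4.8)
The plan is to argue by contradiction, using two observations: first, that a minimal set either lies entirely inside the closed invariant set $A$ or is disjoint from it; and second, that the syndeticity of the return times of $x$ to neighbourhoods of $A$ is incompatible with the orbit of $x$ spending a long time near a minimal set that is far from $A$.

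Let $M$ be a minimal subset of $(X,f)$ and suppose, for contradiction, that $M\not\subset A$. Since $A$ is closed and invariant, $M\cap A$ is a closed invariant subset of the minimal set $M$, hence either empty or all of $M$; our assumption forces $M\cap A=\emptyset$. As $M$ and $A$ are disjoint compacta, $\delta_0:=\dist(M,A)>0$. Put $W=\set{z\in X:\dist(z,A)<\delta_0/3}$ and $V=\set{z\in X:\dist(z,M)<\delta_0/3}$; then $W$ is an open neighbourhood of $A$, $V$ is an open neighbourhood of $M$, and $W\cap V=\emptyset$, because a point at distance $<\delta_0/3$ from $A$ is at distance $>2\delta_0/3$ from $M$. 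Now apply the hypothesis with $U=W$: the set $N(x,W)$ is syndetic, so there is $k\in\N$ with $[i,i+k]\cap N(x,W)\neq\emptyset$ for every $i\in\N$. Fix any $w\in M$. By uniform continuity of $f,f^2,\dots,f^k$ on the compact space $X$, choose $\eta>0$ so that $d(a,b)<\eta$ implies $d(f^j(a),f^j(b))<\delta_0/3$ for all $0\le j\le k$. Since $x$ is a transitive point its orbit is dense in $X$, so there is $n\ge 1$ with $d(f^n(x),w)<\eta$; then $d(f^{n+j}(x),f^j(w))<\delta_0/3$ for $0\le j\le k$, and since $f^j(w)\in M$ we get $f^{n+j}(x)\in V$ for all such $j$. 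Hence $f^{n+j}(x)\notin W$ for $j=0,\dots,k$, i.e.\ $[n,n+k]\cap N(x,W)=\emptyset$, contradicting the choice of $k$. Therefore $M\subset A$, and as $M$ was an arbitrary minimal set, the lemma follows.

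I expect the only real point requiring care to be the opening reduction: it is the combination of minimality of $M$ with closedness and invariance of $A$ that upgrades $M\not\subset A$ to the disjointness $M\cap A=\emptyset$, and this disjointness is exactly what yields the positive separation $\delta_0$ needed to run the estimate; everything after that is a routine continuity-and-syndeticity computation. A minor technical remark is that one should take the index $n$ above to be positive (which is harmless, as $\set{f^n(x):n\ge 1}$ remains dense), so that $n,n+1,\dots,n+k$ are genuine elements of $\N$ as required by the definition of $N(x,W)$.
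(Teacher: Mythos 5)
Your proof is correct; note that the paper does not prove this lemma itself but quotes it from \cite[Lemma 2.2]{HYBeb}, and your argument (upgrading $M\not\subset A$ to $M\cap A=\emptyset$ via minimality, then playing the syndetic gap $k$ of $N(x,W)$ against a block of $k+1$ consecutive visits of the orbit of $x$ to a neighbourhood of $M$ disjoint from $W$) is exactly the standard proof of that result. One small imprecision: $\set{f^n(x):n\geq 1}$ need not be dense in $X$ (it can miss $x$ itself when $x$ is an isolated, non-recurrent transitive point), but what you actually use — that every $w\in M$ is approximated by points $f^n(x)$ with $n\geq 1$ — always holds, since $\overline{\set{f^n(x):n\geq 1}}$ is a closed invariant set meeting $M$ (it contains $f(x)$ if $x\in M$, and contains $X\setminus\set{x}$ otherwise), hence contains $M$ by minimality.
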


\begin{lem}\label{lem:connected}
Let $(X,f)$ be transitive and pointwise recurrent.
If $(X,f)$ contains a minimial set that is connected, then $X$ is connected.
\end{lem}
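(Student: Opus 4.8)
The plan is to pass from $(X,f)$ to the zero-dimensional factor obtained by collapsing connected components, show that this factor is minimal, and then use the connected minimal set to see that it is a single point. Concretely, let $\pi\colon X\to\bar X$ be the quotient map sending each point of $X$ to its connected component. It is well known that the decomposition of a compact metric space into connected components is upper semicontinuous, so $\bar X$ is again a compact metrizable space, and it is zero-dimensional. Since $f$ carries connected sets to connected sets, it induces a continuous surjection $\bar f\colon\bar X\to\bar X$ with $\bar f\circ\pi=\pi\circ f$; thus $(\bar X,\bar f)$ is a factor of $(X,f)$, hence again transitive (the image of a dense orbit is dense) and pointwise recurrent (the image of a recurrent point is recurrent). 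Since $X$ is connected precisely when it has a single connected component, i.e.\ when $\bar X$ is a one-point space, it suffices to prove that $(\bar X,\bar f)$ is trivial.

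The core would be the following return-time estimate for the zero-dimensional pointwise recurrent system $(\bar X,\bar f)$: \emph{for every clopen $V\subset\bar X$ there is $R\in\N$ such that every $x\in V$ satisfies $\bar f^n(x)\in V$ for some $1\le n\le R$.} I would prove it as follows. Each point of $V$ is recurrent and $V$ is a neighborhood of it, so the first-return time $r(x)=\min\{n\ge1:\bar f^n(x)\in V\}$ is finite on $V$; if $r$ were unbounded, pick $x_k\in V$ with $r(x_k)\to\infty$ and pass to a limit point $x_\infty$, which lies in $V$ because $V$ is closed. For each fixed $j\ge1$ and all large $k$ one has $\bar f^j(x_k)\in\bar X\setminus V$, and $\bar X\setminus V$ is closed, so $\bar f^j(x_\infty)\in\bar X\setminus V$; hence the forward orbit of $x_\infty$ after time $0$ stays in $\bar X\setminus V$, so $x_\infty\notin\omega(x_\infty,\bar f)$ although $x_\infty\in V$, contradicting recurrence. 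Granting the estimate, let $p$ be a transitive point of $\bar X$. Its orbit meets every non-empty open set, so for each clopen neighborhood $V$ of $p$ the estimate shows $N(p,V)$ to be syndetic; since the clopen neighborhoods of $p$ form a neighborhood base — this is where zero-dimensionality is essential — $N(p,V)$ is syndetic for \emph{every} neighborhood $V$ of $p$. By the classical characterization of minimal (almost periodic) points, $p$ is then a minimal point, and since $p$ is transitive, $\bar X=\overline{\orbp(p,\bar f)}$ is a minimal set.

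Finally I would invoke that $X$ contains a connected minimal set $M$. Then $\pi(M)$ is a connected subset of the zero-dimensional space $\bar X$, so it is a single point $z$; and from $f(M)\subset M$ we get $\bar f(z)=z$. Thus $\{z\}$ is a non-empty closed $\bar f$-invariant subset of the minimal system $(\bar X,\bar f)$, so $\bar X=\{z\}$, and therefore $X$ has exactly one connected component and is connected. The step I expect to be the main obstacle is the return-time estimate: the reduction to $\bar X$ and the final collapse are routine, but upgrading pointwise recurrence to \emph{uniform} recurrence of the transitive point is where the hypotheses really come in, and it is exactly there that the clopen sets provided by zero-dimensionality — available after passing to $\bar X$, but not in $X$ itself — are indispensable.
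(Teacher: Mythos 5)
Your proof is correct, but note that the paper itself never proves Lemma~\ref{lem:connected}: it is imported verbatim from Huang and Ye \cite[Theorem~4.3]{HYBeb}, so the only comparison available is with that external source rather than with an internal argument. Your route --- pass to the quotient $\bar X$ by connected components (which is compact, metrizable and zero-dimensional because components coincide with quasicomponents in compact Hausdorff spaces, making the decomposition upper semicontinuous), transport transitivity and pointwise recurrence to $(\bar X,\bar f)$, prove the uniform first-return bound for clopen sets, conclude via Gottschalk's syndetic characterization that the image of the transitive point is almost periodic so that $\bar X$ is minimal, and finally collapse $\bar X$ to the fixed point $\pi(M)$ --- is sound at every step; in particular the compactness argument for the return-time bound (a limit of points with first-return times tending to infinity would be a recurrent point of the clopen set $V$ whose forward orbit avoids $V$) is exactly right, and it is indeed the crux, since pointwise recurrence must be upgraded to syndetic recurrence along the transitive orbit. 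Two minor remarks: surjectivity of $\bar f$ is never actually used (though it does hold, as a transitive pointwise recurrent map is onto), and Gottschalk's theorem is the one classical ingredient not stated in the paper; you could instead feed your syndetic-visit observation into Lemma~\ref{lem:proximal} applied in $\bar X$ with $A=\{\pi(M)\}$ to see that $\pi(M)$ is the unique minimal set of $\bar X$, but to conclude $\bar X=\{\pi(M)\}$ you would still need the almost periodicity of the transitive point, so some form of Gottschalk is genuinely needed on this route. What your argument buys is a self-contained, elementary proof in the spirit of the related result of Auslander, Glasner and Weiss \cite{AGW05} on pointwise recurrence in zero-dimensional systems; what the citation buys the authors is brevity. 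Your closing observation is also accurate: zero-dimensionality enters only to provide a clopen neighborhood base, since the return-time estimate itself uses nothing beyond clopenness of $V$ and recurrence of its points.
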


Let $(X,f)$ and $(Y,g)$ be two dynamical systems.
If there is a continuous surjection $\pi\colon X\to Y$ which intertwines the actions (i.e., $\pi\circ f=g\circ \pi$),
then we say that $\pi$ is a \emph{factor map},
$(Y,g)$ is a \emph{factor} of $(X,f)$ or $(X,f)$ is an \emph{extension} of $(Y,g)$.

Let $(X,f)$ be a dynamical system.
If there exists an increasing sequence $\{n_k\}_{k=1}^\infty$ in $\N$ such that
$$
\lim_{k\to \infty}\sup_{x\in X}d(f^{n_k}(x),x)=0
$$
then we say that $(X,f)$ is \emph{uniformly rigid}.
In other words $\set{f^{n_k}}_{k=1}^\infty$ converges uniformly to the identity map on $X$.
Clearly, each uniformly rigid system is pointwise recurrent.

A point $x\in X$ is an \emph{equicontinuity point} if for every $\eps>0$ there is $\delta>0$
such that if $d(x,y)<\delta$ then $d(f^n(x),f^n(y))<\eps$ for every $n\geq 0$.
A transitive system $(X,f)$ is \emph{almost equicontinuous}
if every transitive point is equicontinuous.
It is known (see~\cite[Lemma~1.2]{GW93} or \cite[Corollary~3.7]{AAB})
that every almost equicontinuous system is uniformly rigid.
It is shown in~\cite[Propsition~1.5]{GW93} that every uniformly rigid transitive system
has an almost equicontinuous extension.
Since the uniformly rigid property is preserved under factor maps,
a transitive system is uniformly rigid if and only if it has an almost equicontinuous extension.

\section{Proximality and scrambled sets}

Let $(X,f)$ be a dynamical system.
A pair of points $(x,y)\in X\times X$ is \emph{proximal} if $\liminf_{n\to\infty} d(f^n(x),f^n(y))=0$, and
\emph{asymptotic} if $\lim_{n\to\infty} d(f^n(x),f^n(y))=0$.
It is convenient to study these concepts using various relations on $X$, that is, subsets of $X \times X$.
For any $\varepsilon>0$, denote by
\begin{align*}
  V_\varepsilon=\{(x,y)\in X\times X\colon d(x,y)<\varepsilon\},\quad
  \overline{V}\!_\varepsilon=\{(x,y)\in X\times X\colon d(x,y)\leq \varepsilon\}.
\end{align*}
For any subset $R\subset X \times X$ and any point $x\in X$, we write
\[R(x)=\{y\in X\colon (x,y)\in R\}.\]
So, for example,  $V_\varepsilon$ (or $\overline{V}\!_\varepsilon$)
is the open (respectively closed) ball of radius $\varepsilon$ centred at $x$.
We use these to define the sets of proximal pairs and asymptotic pairs:
\begin{align*}
  \mathrm{Prox}(f)&=\bigl\{(x,y)\in X\times X\colon \liminf_{n\to\infty} d(f^n(x),f^n(y))=0\bigr\}\\
                  &=\bigcap_{\varepsilon>0}\bigcap_{N\geq 1} \bigcup_{n\geq N} (f\times f)^{-n} (V_{\varepsilon}),\\
  \mathrm{Asym}(f)&=\bigl\{(x,y)\in X\times X\colon \lim_{n\to\infty} d(f^n(x),f^n(y))=0\bigr\}\\
                  &=\bigcap_{\varepsilon>0}\bigcup_{N\geq 1} \bigcap_{n\geq N} (f\times f)^{-n} (\overline{V}\!_{\varepsilon}).
\end{align*}
We start with the following remark.
\begin{rem}\label{rem:invariant-s-s}
If $(x,f(x))\in \mathrm{Prox}(f)$, then by compactness of $X$ there exists an increasing sequence
$\{n_i\}$ in $\N$ and a point $p\in X$
such that $\lim_{i\to\infty}d(f^{n_i}(x),f^{n_i}(f(x)))=0$ and $\lim_{i\to\infty}f^{n_i}(x)=p$,
which implies that $f(p)=p$ by continuity of $f$, that is $p$ is a fixed point.
In this case $(f^n(x),f^m(x))\in  \mathrm{Prox}(f)$ for all $m,n\geq 0$.
\end{rem}

We say that a dynamical system $(X,f)$ is \emph{proximal} if each pair in $X\times X$ is proximal,
i.e. $\mathrm{Prox}(f)=X\times X$.
We have the following characterization of proximal systems, which is essential contained in
 \cite[Proposition 2.2]{AK} and \cite[Proposition 2.2]{HYCS}.
Here we provide a proof for the completeness.

\begin{prop}
Let $(X,f)$ be a dynamical system. The following conditions are equivalent:
\begin{enumerate}
  \item $(X,f)$ is proximal;
  \item $(x,f(x))\in \mathrm{Prox}(f)$ for all $x\in X$ and $X$ contains a unique fixed point;
  \item $X$ contains a fixed point which is the unique minimal point of $X$.
\end{enumerate}
\end{prop}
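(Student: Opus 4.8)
The plan is to establish the cyclic chain of implications $(1)\Rightarrow(2)\Rightarrow(3)\Rightarrow(1)$, using Remark~\ref{rem:invariant-s-s} as the bridge between proximality of the pair $(x,f(x))$ and the existence of a fixed point.

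First I would prove $(1)\Rightarrow(2)$. If $(X,f)$ is proximal, then trivially every pair, in particular $(x,f(x))$, lies in $\mathrm{Prox}(f)$. For the fixed point: pick any $x\in X$; since $(x,f(x))\in\mathrm{Prox}(f)$, Remark~\ref{rem:invariant-s-s} produces a fixed point $p$. Uniqueness is immediate, since if $p,q$ were both fixed then $(p,q)$ being proximal forces $\liminf_n d(f^n(p),f^n(q))=\liminf_n d(p,q)=d(p,q)=0$, so $p=q$.

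Next, $(2)\Rightarrow(3)$. Let $p$ be the unique fixed point; it is certainly a minimal point (its orbit closure is $\{p\}$). Suppose $M\subset X$ is any minimal set and $y\in M$. Since $(y,f(y))\in\mathrm{Prox}(f)$, Remark~\ref{rem:invariant-s-s} gives a fixed point, which by uniqueness must be $p$, and moreover the remark yields an increasing sequence $\{n_i\}$ with $f^{n_i}(y)\to p$. Hence $p\in\omega(y,f)\subset\overline{\orbp(y,f)}=M$; by minimality $M=\overline{\orbp(p,f)}=\{p\}$. So $p$ is the unique minimal point.

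Finally, $(3)\Rightarrow(1)$. Let $p$ be the fixed point that is the unique minimal point of $X$, and take any pair $(x,y)\in X\times X$. Consider the orbit closure $K=\overline{\orbp((x,y),f\times f)}\subset X\times X$; it contains a minimal subset $N$ of $(X\times X,f\times f)$. Projecting $N$ to each coordinate gives minimal subsets of $(X,f)$ (a continuous factor of a minimal system is minimal), which by hypothesis must both equal $\{p\}$; hence $N=\{(p,p)\}$, so $(p,p)\in\overline{\orbp((x,y),f\times f)}$. This means there is a sequence $n_i\to\infty$ with $(f\times f)^{n_i}(x,y)\to(p,p)$, i.e. $d(f^{n_i}(x),f^{n_i}(y))\to0$, so $(x,y)\in\mathrm{Prox}(f)$. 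Since $(x,y)$ was arbitrary, $(X,f)$ is proximal.

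The only genuinely delicate point is the use of minimal sets inside orbit closures in the direction $(3)\Rightarrow(1)$: one must invoke Zorn's lemma (or the standard fact that every dynamical system on a nonempty compact space contains a minimal set) to produce $N$, and then argue that the coordinate projections of a minimal set are minimal. Both are routine, so I expect no real obstacle; the argument is essentially a careful bookkeeping of what Remark~\ref{rem:invariant-s-s} already provides.
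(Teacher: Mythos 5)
Your proof is correct and follows essentially the same route as the paper's: the same cycle $(1)\Rightarrow(2)\Rightarrow(3)\Rightarrow(1)$, with Remark~\ref{rem:invariant-s-s} supplying the fixed point in the first two implications and the observation that $(p,p)$ is the unique minimal point of $(X\times X,f\times f)$ driving the last one. The only cosmetic difference is that you locate a minimal set inside the orbit closure of $(x,y)$ rather than inside $\omega((x,y),f\times f)$ as the paper does; this is harmless (if $(p,p)$ is an actual orbit point, the tail of the orbit is constantly $(p,p)$ because $p$ is fixed), but taking the minimal set in the $\omega$-limit set gives the sequence $n_i\to\infty$ directly.
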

\begin{proof}
(1)$\Rightarrow$(2) follows from the definition and Remark \ref{rem:invariant-s-s}.

(2)$\Rightarrow$(3) Let $p$ be the unique fixed point and $q$ be a minimal point in $X$.
By $(q,f(q))\in \mathrm{Prox}(f)$, there exists a fixed point in $\overline{\orbp(q,f)}$.
Then $p\in \overline{\orbp(q,f)}$. But $q$ is a minimal point, so $q=p$,
that is $p$ the unique minimal point of $X$.

(3)$\Rightarrow$(1)
If a fixed point $p$ is the unique minimal point of $X$ then $(p,p)$
is the unique minimal  point of $X \times X$. Hence, for every pair $(x, y) \in  X \times X$ one has
$(p, p) \in \omega((x, y),f\times f)$ and so $(x, y)\in \mathrm{Prox}(f)$.
\end{proof}

We say that a pair of points $(x,y)\in X\times X$ is \emph{scrambled} if it is proximal but not asymptotic,
and a subset $S$ of $X$ containing at least two points
is \emph{scrambled} if every pair of distinct points of $S$ is scrambled.
The set of scrambled pairs is denoted by $\mathrm{LY}(f)=\mathrm{Prox}(f)\setminus \mathrm{Asym}(f)$.
It is clear that $\mathrm{Prox}(f)$ is $G_\delta$ and $\mathrm{Asym}(f)$ is $F_{\sigma\delta}$.
But in general $\mathrm{LY}(f)$ is not a $G_\delta$ relation on $X$,
thus in various cases it is not possible to apply the Mycielski Theorem to get scrambled sets.
To overcome this disadvantage, we define two stronger version of scrambled pairs.

For a given positive number $\delta>0$, we say that a pair is \emph{$\delta$-scrambled}
if $(x,y)$ is proximal and $\limsup_{n\to\infty}d(f^n(x),f^n(y))\geq \delta$.
Denote by
\begin{align*}
  \mathrm{Asym}_\delta(f)&=\bigl\{(x,y)\in X\times X\colon \limsup_{n\to\infty} d(f^n(x),f^n(y))<\delta\bigr\}\\
                  &=\bigcup_{\varepsilon<\delta}\bigcup_{N\geq 1}
                  \bigcap_{n\geq N} (f\times f)^{-n} (\overline{V}\!_{\varepsilon}).
\end{align*}
The set of $\delta$-scrambled pairs, denote by $\mathrm{LY}_\delta(f)$,
is  $\mathrm{Prox}(f)\setminus \mathrm{Asym}_\delta(f)$.
It is clear that $\mathrm{Asym}_\delta(f)$ is $F_\sigma$, then $\mathrm{LY}_\delta(f)$ is a $G_\delta$ relation on $X$.

A pair $(x,y)$ is \emph{strongly scrambled}
if $(x,y)$ is proximal for $(X,f)$ and recurrent in $(X\times X,f\times f)$.
This notion was first introduced by Akin in~\cite{A04}.
We can define the set $\mathrm{Recur}(f)$ of recurrent points of $(X,f)$ equivalently by
\begin{align*}
  \mathrm{Recur}(f)&=\{x\in X\colon \liminf_{n\to\infty} d(f^n(x),x)=0\}\\
          &=\bigcap_{\varepsilon>0} \bigcap_{N\geq 1}\bigcup_{n\geq N}
          \bigcup_{x\in X}\bigl(V_{\varepsilon}(x)\cap f^{-n}(V_{\varepsilon}(x))\bigr).
\end{align*}
We denote by $\mathrm{sLY}(f)$ the set of strongly scrambled pairs, that is $\mathrm{sLY}(f)=\mathrm{Prox}(f)\cap \mathrm{Recur}(f\times f)$.
It is clear that $\mathrm{Recur}(f)$ is $G_\delta$, thus $\mathrm{sLY}(f)$ is a $G_\delta$ relation on $X$.

Similarly as before, we say that a subset $S$ of $X$ containing at least two points
is \emph{strongly scrambled} (resp. \emph{$\delta$-scrambled} for some $\delta>0$)
if every pair of distinct points of $S$ is strongly scrambled (resp. $\delta$-scrambled).
Now using the Mycielski Theorem, we give a easy proof the result that a Mycielski strongly scrambled set
(resp. a Mycielski $\delta$-scrambled set)
can be selected when an uncountable strongly scrambled set
(resp. an uncountable $\delta$-scrambled set for some $\delta>0$)
exists in a dynamical system, which was first proved in \cite[Theorem 6.10]{A04} (resp. \cite[Theorem 16]{BHS}).
\begin{thm}\label{thm:uncountable-to-Mycielski-1}
Let $(X,f)$ be a dynamical system.
\begin{enumerate}
\item If $(X,f)$ has an uncountable strongly scrambled set, then
it also has a Mycielski strongly scrambled set.
\item If $(X,f)$ has an uncountable $\delta$-scrambled set for some $\delta>0$,
then it also has a Mycielski $\delta$-scrambled set.
\end{enumerate}
\end{thm}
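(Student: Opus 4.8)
The plan is to reduce both parts to a single application of the Mycielski Theorem, after replacing the given uncountable scrambled set by its set of condensation points; parts (1) and (2) are then word‑for‑word the same argument, with $\mathrm{sLY}(f)$ replaced throughout by $\mathrm{LY}_\delta(f)$, since both are $G_\delta$ relations on $X$ by the discussion preceding the theorem. So I would start from an uncountable strongly scrambled set $S\subset X$ and pass to $Y=S^*$, its set of condensation points. Since $S$ is uncountable (so $S\setminus S^*$ countable forces $S^*\neq\emptyset$), and since $S^*$ is closed in $X$ — hence compact — and perfect by the properties of condensation points recalled in the preliminaries, $(Y,d)$ is a perfect compact metric space to which the Mycielski Theorem applies.

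The main step is to verify that $R:=\mathrm{sLY}(f)\cap(Y\times Y)$ is residual in $Y\times Y$. It is $G_\delta$ in $Y\times Y$ because $\mathrm{sLY}(f)$ is $G_\delta$ in $X\times X$, so only density needs to be checked. For this I would use that $S\cap S^*$ is uncountable and that $(S\cap S^*)^*=S^*=Y$, so that every point of $Y$ is a condensation point of $S\cap S^*$. Given non-empty relatively open sets $U,V\subset Y$, the sets $U\cap(S\cap S^*)$ and $V\cap(S\cap S^*)$ are then both uncountable, hence I can choose distinct points $x\in U\cap S\cap S^*$ and $y\in V\cap S\cap S^*$ (possible even when $U=V$, since an uncountable set has at least two points). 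As distinct points of the strongly scrambled set $S$, the pair $(x,y)$ is strongly scrambled, i.e. $(x,y)\in\mathrm{sLY}(f)\cap(U\times V)$. Thus $R$ meets every non-empty open box in $Y\times Y$, so $R$ is dense and therefore residual.

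Applying the Mycielski Theorem to $Y$ and $R$ then yields a dense Mycielski set $M\subset Y$ with $M\times M\subset R\cup\Delta\subset\mathrm{sLY}(f)\cup\Delta$. Consequently every pair of distinct points of $M$ is strongly scrambled, so $M$ is a strongly scrambled set; and being a countable union of Cantor subsets of $Y\subset X$, it is a Mycielski subset of $X$, which proves (1). Replacing $\mathrm{sLY}(f)$ by $\mathrm{LY}_\delta(f)$ throughout (and noting $\mathrm{LY}_\delta(f)$ is $G_\delta$) gives (2) verbatim.

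The only genuinely delicate point is the density step, and it is exactly where passing to condensation points is essential: the scrambled relation restricted to the closure of an arbitrary uncountable set need not be dense there, whereas the identity $(S\cap S^*)^*=S^*$ makes $S^*$ simultaneously perfect and ``saturated'' with scrambled pairs, which is what allows one to place the two distinct points $x,y$ inside prescribed neighbourhoods. Everything else is a routine bookkeeping with $G_\delta$ sets and the cited Mycielski Theorem.
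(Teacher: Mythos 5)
Your proposal is correct and follows essentially the same route as the paper: pass to the set of condensation points $K^*$ of the given uncountable scrambled set, note that $\mathrm{sLY}(f)$ (resp. $\mathrm{LY}_\delta(f)$) restricted to $K^*\times K^*$ is a dense $G_\delta$ relation because $K\cap K^*$ is dense in the perfect compact set $K^*$, and apply the Mycielski Theorem. Your extra care in the density step (choosing two distinct points even when $U=V$) is a minor elaboration of exactly the argument the paper uses.
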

\begin{proof}
(1) Let $K$ be an uncountable strongly scrambled subset of $X$.
Let $K^*$ be the set of condensation points of $K$.
Then $K^*$ is closed, perfect
and $\mathrm{sLY}(f)\cap K^*\times K^*$ is a dense  $G_\delta$ subset of $K^*$,
because $K\cap K^*$ is dense in $K^*$ and $K\times K\setminus \Delta\subset \mathrm{sLY}(f)$.
By the Mycielski Theorem, there exists a dense Mycielski subset $S$ of $K^*$
such that $S\times S\setminus \Delta\subset \mathrm{sLY}(f)$, that is $S$ is strongly scrambled.

(2) The proof is similar to (1) by replacing $\mathrm{sLY}(f)$ with $\mathrm{LY}_\delta(f)$.
\end{proof}

A dynamical system $(X,f)$ is \emph{completely scrambled} if the whole space $X$ is a scrambled set.
In~\cite{HYCS}, Huang and Ye showed that there are ``many'' compacta admitting completely scrambled
homeomorphisms, which include some countable compacta,
the Cantor set and continua of arbitrary dimension.
In~\cite{BHS},  Blanchard, Huang and Snoha showed
that the whole space $X$ cannot be a $\delta$-scrambled  set for any $\delta>0$.
Note that one can also obtain this from a result of Schwartzman (e.g. see \cite[Theorem~2.1]{King}) that is,
if $X$ is infinite then for every $\eps>0$ there is $x\neq y\in X$ such that $d(f^n(x),f^n(y))<\eps$ for all $n\geq 1$.

\section{Invariant scrambled sets}
Let $(X,f)$ be a dynamical system.
A subset $S$ of $X$ is an \emph{invariant scrambled set} (resp. an \emph{invariant strongly scrambled set},
an \emph{invariant $\delta$-scrambled} with $\delta>0$)
if it is invariant and  scrambled (resp. invariant and  strongly scrambled, invariant and $\delta$-scrambled).
The main aim of this section is to show when a transitive  system has an invariant scrambled set.

We first show that uncountable invariant scrambled sets can be dense in a suitable subsystem.
\begin{thm}\label{thm:un-iss}
If a dynamical system $(X,f)$ has an uncountable invariant scrambled set,
then there exists a subsystem $(Y,f)$ of $(X,f)$ which has a dense uncountable invariant scrambled set.
\end{thm}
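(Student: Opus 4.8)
The plan is to shrink $X$ down to (the closure of the essential part of) the given scrambled set. Let $S\subseteq X$ be an uncountable invariant scrambled set, and let $S^{*}$ denote its set of condensation points. As recalled in the preliminaries, $S^{*}$ is closed and perfect, $S\setminus S^{*}$ is at most countable, and consequently $S\cap S^{*}$ is still uncountable and is dense in $S^{*}$. I will put $Y:=S^{*}$ and prove that $S\cap S^{*}$ is a dense uncountable invariant scrambled set of $(Y,f)$.

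Only two things need verification. First, $Y=S^{*}$ must be $f$-invariant; granting this, $(Y,f)$ is a subsystem of $(X,f)$ since $S^{*}$ is closed. Second, $S\cap S^{*}$ must be invariant and scrambled. The scrambled property is immediate: $S\cap S^{*}$ is a subset of $S$, and the metric on $Y$ together with the forward orbits of points of $S$ are unaffected by the passage to $Y$, so both defining conditions $\liminf_{n\to\infty}d(f^{n}(x),f^{n}(y))=0$ and $\limsup_{n\to\infty}d(f^{n}(x),f^{n}(y))>0$ persist. Invariance of $S\cap S^{*}$ then follows from $f(S\cap S^{*})\subseteq f(S)\cap f(S^{*})\subseteq S\cap S^{*}$, once invariance of $S^{*}$ is in hand. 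So everything reduces to showing $f(S^{*})\subseteq S^{*}$.

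The point that makes this work is that $f$ restricted to $S$ is injective: if $f(x)=f(y)$ for distinct $x,y\in S$, then $f^{n}(x)=f^{n}(y)$ for all $n\geq 1$, so $\limsup_{n\to\infty}d(f^{n}(x),f^{n}(y))=0$, contradicting that $(x,y)$ is a scrambled pair. Assuming this, let $x\in S^{*}$ and let $V$ be any neighborhood of $f(x)$. Then $f^{-1}(V)$ is a neighborhood of $x$, so $f^{-1}(V)\cap S$ is uncountable; since $f|_{S}$ is injective, the image $f(f^{-1}(V)\cap S)$ is uncountable too, and it is contained in $V\cap f(S)\subseteq V\cap S$. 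Hence $V\cap S$ is uncountable for every neighborhood $V$ of $f(x)$, i.e. $f(x)\in S^{*}$, as desired.

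I expect the injectivity observation to be the only substantive ingredient; the invariance of the closure, the perfectness of $S^{*}$, and the transfer of the two asymptotic conditions to $(Y,f)$ are all routine. If one is content to forgo perfectness of $Y$, there is an even shorter argument: take $Y:=\overline{S}$, which is closed with $f(Y)\subseteq\overline{f(S)}\subseteq\overline{S}=Y$, and observe that $S$ itself is already dense in $Y$, uncountable, invariant and scrambled.
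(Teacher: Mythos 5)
Your proof is correct, and while it shares the paper's skeleton (pass to the set $S^*$ of condensation points, exploit that $f$ is injective on a scrambled set), the key step is genuinely different and more economical. The paper never proves $f(K^*)\subset K^*$ directly: it instead prunes the scrambled set, setting $A=K\setminus K^*$ and $M=K\setminus\bigcup_{i=-\infty}^{\infty}f^{-i}(A)$ (injectivity of each $f^i|_K$ makes the removed set countable), shows $M$ is invariant with $\overline{M}=M^*=K^*$, and only then deduces invariance of $K^*$ from invariance of $\overline{M}$, using $M$ as the dense invariant scrambled set. You instead prove $f(S^*)\subset S^*$ in one stroke from continuity plus injectivity of $f|_S$ (an uncountable set $f^{-1}(V)\cap S$ has uncountable injective image inside $V\cap S$), and then the larger, more natural set $S\cap S^*\supset M$ does the job; this avoids the orbit-saturation construction entirely and, as a bonus, makes explicit that $S\cap S^*$ itself is invariant. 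One caveat concerns your closing short-cut with $Y=\overline{S}$: it does verify the literal statement, but it sacrifices perfectness of $Y$, which is exactly what the paper extracts from this proof later (in Theorem~\ref{thm:uncountable-to-Mycielski} the phrase ``$Y$ is perfect by the proof of Theorem~\ref{thm:un-iss}'' is needed before Lemma~\ref{lem:dense-inv-scrambled-set} and the Mycielski Theorem can be applied). Your main argument with $Y=S^*$ retains perfectness, so it supports that application just as well; only the short-cut would not.
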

\begin{proof}
Let $K$ be an uncountable invariant scrambled subset of $X$.
Let $K^*$ be the set of condensation points of $K$.
Then $K^*$ is closed and perfect and $A=K\setminus K^*$ is at most countable.
Since $K$ is a scrambled set, $f^i|_K$ is injective,
and therefore $f^{-i}(A)\cap K$ and $f^i(A)$ are at most countable for $i\in \mathbb{N}_0$.
Let $M=K\setminus \bigcup_{i=-\infty}^\infty f^{-i}(A)$. Clearly $M$ is $f$-invariant and uncountable,
$M^*\subset K^*$ and if we take any $p\in K^*$ and any neighborhood $U$ of $p$
then $M\cap U=(K\cap U) \setminus \bigcup_{i=-\infty}^\infty f^{-i}(A)$ is uncountable and hence $p\in M^*$,
which shows that $K^*=M^*$.
Note that $M\subset \overline{M}\subset \overline{K^*}=K^*=M^*$.
Since $M$ is invariant, $K^*$ is also invariant, which in other words means $(K^*,f)$ is a subsystem of $(X,f)$.
By the construction, $M$ is dense uncountable invariant scrambled subset in $K^*$,
which ends the proof.
\end{proof}

\begin{lem}\label{lem:dense-inv-scrambled-set}
Let $(X,f)$ be a dynamical system acting on a perfect set $X$.
\begin{enumerate}
\item If $(X,f)$ has a dense invariant strongly scrambled set,
then it has a dense Mycielski invariant strongly scrambled set.\label{lem:dense-inv-scrambled-set:1}
\item If it has a dense invariant $\delta$-scrambled set for some $\delta>0$,
then it has a dense Mycielski invariant $\delta$-scrambled set.\label{lem:dense-inv-scrambled-set:2}
\end{enumerate}
\end{lem}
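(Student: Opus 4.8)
The plan is to exploit the fact that both $\mathrm{sLY}(f)$ and $\mathrm{LY}_\delta(f)$ are $G_\delta$ subsets of $X \times X$, so that the Mycielski Theorem can be applied, but to do so in a way that respects invariance. The naive approach—take a dense strongly scrambled set $S$, pass to condensation points, and invoke Mycielski—produces a dense Mycielski strongly scrambled set but gives no control over invariance of the resulting Cantor union. So the real work is to upgrade the Mycielski set to an invariant one without destroying density or the scrambling property.

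First I would record the key observation underlying invariance: if $(x,y) \in \mathrm{sLY}(f)$ then, since $(x,y)$ is proximal, by Remark~\ref{rem:invariant-s-s} all pairs $(f^n(x), f^m(x))$ with $m,n \geq 0$ are proximal; moreover $(f(x), f(y))$ is again recurrent in $X \times X$ because the image of a recurrent point under $f \times f$ is recurrent. Hence $(f(x), f(y)) \in \mathrm{sLY}(f)$, and more generally $\mathrm{sLY}(f)$ is $(f\times f)$-invariant; the same holds for $\mathrm{LY}_\delta(f)$ since the condition $\limsup d(f^n(x),f^n(y)) \ge \delta$ passes to $(f(x),f(y))$ and proximality is preserved. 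Thus if $M$ is any Mycielski set with $M \times M \setminus \Delta \subset \mathrm{sLY}(f)$, then automatically $f(M) \times f(M) \setminus \Delta \subset \mathrm{sLY}(f)$ too, and likewise $\orbp(M,f) \times \orbp(M,f) \setminus \Delta \subset \mathrm{sLY}(f)$ provided $f|_M$ is injective and $\orbp(M,f)$ is again a countable union of Cantor sets; injectivity on $M$ holds because a scrambled pair is never a fixed pair of any iterate, and continuous injective images of Cantor sets are Cantor.

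So the construction proceeds as follows. Let $S$ be the given dense invariant strongly scrambled set (for part~\eqref{lem:dense-inv-scrambled-set:1}; the $\delta$-scrambled case is identical with $\mathrm{LY}_\delta(f)$ in place of $\mathrm{sLY}(f)$, using that this relation is still $G_\delta$ and $(f\times f)$-invariant). Since $S$ is dense and $X$ is perfect, I claim $S$ is uncountable: a dense subset of a perfect compact metric space is infinite, and if $S$ were countable then $S \times S \setminus \Delta$, a countable dense subset of $X \times X$, would lie in $\mathrm{sLY}(f)$, but one checks this forces $\mathrm{sLY}(f)$ to be dense $G_\delta$ in $X \times X$ anyway—so in either case $\mathrm{sLY}(f)$ is a dense $G_\delta$ subset of $X \times X$ (density because it contains the dense set $S \times S \setminus \Delta$, and it is $G_\delta$ as noted in the excerpt). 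Now apply the Mycielski Theorem to $R = \mathrm{sLY}(f)$: there is a dense Mycielski set $M_0 \subset X$ with $M_0 \times M_0 \subset \mathrm{sLY}(f) \cup \Delta$. Finally set $M = \bigcup_{n \geq 0} f^n(M_0)$. By the invariance observation above, $M$ is $f$-invariant, dense (it contains $M_0$), Mycielski (each $f^n(M_0)$ is a countable union of Cantor sets, using injectivity of $f|_{M_0}$ which follows from the scrambled condition), and $M \times M \setminus \Delta \subset \mathrm{sLY}(f)$, i.e. $M$ is a dense Mycielski invariant strongly scrambled set. Part~\eqref{lem:dense-inv-scrambled-set:2} follows verbatim with $\mathrm{LY}_\delta(f)$.

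The main obstacle is the bookkeeping in the last step: verifying that $f^n(M_0)$ is genuinely a countable union of \emph{Cantor} sets (one must check that $f^n$ restricted to each Cantor piece of $M_0$ is injective, hence a homeomorphism onto a zero-dimensional perfect set) and that the cross pairs from distinct orbit levels, $f^n(M_0) \times f^m(M_0)$, still land in the scrambled relation—this is exactly where the $(f \times f)$-invariance of $\mathrm{sLY}(f)$ (resp. $\mathrm{LY}_\delta(f)$) and the "all iterates proximal" half of Remark~\ref{rem:invariant-s-s} are used, and where one must be careful that two distinct points of $M$ coming from different orbit levels are still distinct and form a scrambled pair rather than an asymptotic or diagonal one. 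A pair $(f^n(x), f^m(y))$ with $x,y \in M_0$ and $f^n(x) \ne f^m(y)$ is proximal by Remark~\ref{rem:invariant-s-s} applied after noting $(x,y) \in \mathrm{Prox}(f)$ implies all forward images are mutually proximal; it is recurrent (resp. has $\limsup \ge \delta$) because $\mathrm{sLY}(f)$ (resp. $\mathrm{LY}_\delta(f)$) is forward invariant under $f \times f$ and contains $(x,y)$—so the pair is strongly scrambled (resp. $\delta$-scrambled), as required.
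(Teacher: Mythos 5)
Your reduction to the Mycielski Theorem applied to $\mathrm{sLY}(f)$ alone has a genuine gap at the cross-level pairs. From $(x,y)\in\mathrm{sLY}(f)$ the $(f\times f)$-invariance of $\mathrm{sLY}(f)$ only gives $(f^k(x),f^k(y))\in\mathrm{sLY}(f)$ for \emph{equal} powers $k$; it says nothing about $(f^n(x),f^m(y))$ with $n\neq m$, nor about same-orbit pairs $(f^n(x),f^m(x))$. Your justification invokes Remark~\ref{rem:invariant-s-s}, but that remark applies to pairs of the form $(x,f(x))$ (a point proximal to its own image), not to an arbitrary proximal pair $(x,y)$: proximality of $(x,y)$ does \emph{not} imply proximality of $(f^n(x),f^m(y))$ for $n\neq m$ (take the product of a period-two orbit with a proximal pair whose first coordinates agree: the pair is proximal, but shifting one coordinate by one step keeps the first coordinates a fixed distance apart forever). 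Since the points of $M_0$ produced by Mycielski need not lie in the original invariant set $S$, you do not know $(x,f(x))\in\mathrm{Prox}(f)$ for $x\in M_0$, so neither the cross-level nor the same-orbit proximality is available. In the $\delta$-scrambled case the defect is even more visible: $\limsup_k d(f^k(x),f^k(y))\geq\delta$ gives no lower bound on $\limsup_k d(f^{k+n}(x),f^{k+m}(y))$ when $n\neq m$. (The recurrence half of your argument is fine, since $f^n\times f^m$ commutes with $f\times f$.)

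Note also that your argument never actually uses the invariance of the given dense scrambled set, only its density; if it worked it would show that any dense strongly scrambled set yields a dense \emph{invariant} Mycielski one, which is false: a non-trivial minimal weakly mixing system has a dense Mycielski strongly scrambled set (transitive points of $(X\times X,f\times f)$ are proximal and recurrent), yet it has no fixed point and hence, by Remark~\ref{rem:invariant-s-s}, no invariant scrambled set at all. The paper avoids this trap by building the ``for all $m,n\geq 0$'' requirements into the relation \emph{before} applying Mycielski: it defines $G_\delta$ relations $R,T,P,Q$ (and $R',T'$ in the $\delta$ case) consisting of pairs $(x,y)$ such that all shifted pairs $(f^m(x),f^n(y))$ and $(f^m(x),f^n(x))$ are recurrent in the product, respectively proximal (respectively satisfy the $\limsup\geq\delta$ condition), proves that $K\times K\subset(R\cap T\cap P\cap Q)\cup\Delta$ using the invariance of the given dense set $K$, and only then applies Mycielski; the orbit of the resulting set is then scrambled by construction, with the relations $T,Q$ handling pairs coming from a single orbit. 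To repair your proof you would have to replace $\mathrm{sLY}(f)$ (resp. $\mathrm{LY}_\delta(f)$) by such refined relations and use the invariance of $S$ to check their density.
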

\begin{proof}
To prove \eqref{lem:dense-inv-scrambled-set:1}, denote by
\begin{align*}
  R&=\{(x,y)\in X\times X\colon (f^m(x),f^n(y)\in\mathrm{Recur}(f\times f) \text{ for all } m,n\geq 0\}\\
  T&=\{(x,y)\in X\times X\colon (f^m(x),f^n(x)\in\mathrm{Recur}(f\times f) \text{ for all } m,n\geq 0\}\\
  P&=\{(x,y)\in X\times X\colon (f^m(x),f^n(y)\in\mathrm{Prox}(f) \text{ for all } m,n\geq 0\}\\
\intertext{and}
  Q&=\{(x,y)\in X\times X\colon (f^m(x),f^n(x)\in\mathrm{Prox}(f) \text{ for all } m,n\geq 0\}.
\end{align*}
Since $\mathrm{Recur}(f\times f)$ and $\mathrm{Prox}(f)$ are $G_\delta$ subsets of $X\times X$,
it is easy to verify that $R$, $T$, $P$ and $Q$ are also $G_\delta$ subsets of $X\times X$.
Let $K$ be a dense invariant strongly scrambled subset of $X$.
Then
\[K\times K\subset (R\cap T\cap P\cap Q)\cup \Delta,\]
which implies that
$(R\cap T\cap P\cap Q)$ is a dense $G_\delta$ subset of $X\times X$.
By the Mycielski Theorem, there is a dense Mycielski subset $M$ of $X$ such that
$M \times M\subset (R\cap T\cap P\cap Q)\cup \Delta$.
Let $S=\bigcup_{i=0}^\infty f^i(M)$.
For two distinct points $u,v\in S$, there exist $x,y\in M$ and $m,n\in\N_0$ such that
$f^m(x)=u$ and $f^n(y)=v$.
If $x\neq y$, then $(u,v)$ is strongly scrambled since $(x,y)\in R\cap P$.
If $x=y$, we can find $z\in M$ such that $(x,z)\in R\cap T\cap P\cap Q$ and
then $(u,v)$ is strongly scrambled since $(x,z)\in T\cap Q$.
It proves that $S$ is an invariant  strongly scrambled set.

Note that $M\subset S$, hence $S$ is dense. But $S$ is does not contain asymptotic pairs, hence
$f|_S$ is injective,
and then $S$ is a Mycielski subset of $X$.

To prove \eqref{lem:dense-inv-scrambled-set:2}
is similar to to prove \eqref{lem:dense-inv-scrambled-set:1}. Simply, replace relations $R$ and $T$ by, respectively,
\begin{align*}
R'&=\{(x,y)\in X\times X: \limsup_{k\to\infty} d(f^k(f^m(x)),f^k(f^n(y)))\geq \delta\text{ for all } m,n\geq 0\}\\
T'&=\{(x,y)\in X\times X: \limsup_{k\to\infty} d(f^k(f^m(x)),f^k(f^n(x)))\geq \delta\text{ for all } m,n\geq 0\}. \qedhere
\end{align*}
\end{proof}

\begin{thm}\label{thm:uncountable-to-Mycielski}
Let $(X,f)$ be a dynamical system.
\begin{enumerate}
\item If $(X,f)$ has an uncountable invariant strongly scrambled set, then
it also has a Mycielski invariant strongly scrambled set.
\item If $(X,f)$ has an uncountable invariant $\delta$-scrambled set for some $\delta>0$,
then it also has a Mycielski invariant $\delta$-scrambled set.
\end{enumerate}
\end{thm}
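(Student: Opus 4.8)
The plan is to deduce this immediately from the two preceding results, Theorem~\ref{thm:un-iss} and Lemma~\ref{lem:dense-inv-scrambled-set}. First I would take an uncountable invariant strongly scrambled set $K$ and observe that it is in particular an uncountable invariant scrambled set, so the construction carried out in the proof of Theorem~\ref{thm:un-iss} applies: it produces the closed, invariant, perfect set $K^*$ of condensation points of $K$ together with a dense, uncountable, $f$-invariant subset $M\subset K^*$. The key remark I would add is that this $M$ is a subset of $K$, hence inherits the strong scrambling: every pair of distinct points of $M$ lies in $K\times K\setminus\Delta\subset \mathrm{sLY}(f)$. Thus $(K^*,f)$ is a dynamical system acting on a perfect compact metric space and admitting a dense invariant strongly scrambled set.

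Next I would apply Lemma~\ref{lem:dense-inv-scrambled-set}\eqref{lem:dense-inv-scrambled-set:1} to $(K^*,f)$, obtaining a dense Mycielski invariant strongly scrambled subset $S$ of $K^*$; since $K^*\subset X$, this $S$ is a Mycielski invariant strongly scrambled subset of $X$, which proves (1). For (2) I would run the same argument starting from an uncountable invariant $\delta$-scrambled set $K$: the set $M\subset K$ is then $\delta$-scrambled because each of its distinct pairs lies in $\mathrm{LY}_\delta(f)$, so $(K^*,f)$ has a dense invariant $\delta$-scrambled set, and Lemma~\ref{lem:dense-inv-scrambled-set}\eqref{lem:dense-inv-scrambled-set:2} delivers the required Mycielski invariant $\delta$-scrambled subset.

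I do not expect a genuine obstacle here --- the statement is essentially a repackaging of Theorem~\ref{thm:un-iss} and Lemma~\ref{lem:dense-inv-scrambled-set}. The only points requiring a little care are: verifying that passing to the subsystem preserves the stronger form of scrambling (immediate, since $M\subseteq K$ and both $\mathrm{sLY}(f)$ and $\mathrm{LY}_\delta(f)$ are hereditary for subsets of scrambled sets), and checking that the hypotheses of Lemma~\ref{lem:dense-inv-scrambled-set} hold, namely that $K^*$ is perfect (recorded in the Preliminaries for condensation sets of uncountable sets) and that it is a compact space in its own right, being closed in the compact space $X$.
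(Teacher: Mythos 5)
Your proposal is correct and follows essentially the same route as the paper: invoke Theorem~\ref{thm:un-iss} (whose proof yields a perfect subsystem $K^*$ with a dense invariant set $M\subset K$, so the stronger scrambling is inherited) and then apply Lemma~\ref{lem:dense-inv-scrambled-set} to that subsystem. The only difference is that you spell out explicitly the point the paper leaves implicit, namely that $M\subseteq K$ guarantees the dense invariant set is strongly scrambled (resp. $\delta$-scrambled), which is a welcome clarification but not a different argument.
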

\begin{proof}
(1)
By Theorem~\ref{thm:un-iss},
there exists a subsystem $(Y,f)$ of $(X,f)$ which has a dense uncountable invariant strongly scrambled set.
In fact we also have that $Y$ is perfect by the proof of Theorem~\ref{thm:un-iss}.
Now the result follows by Lemma~\ref{lem:dense-inv-scrambled-set}(1).

(2) The proof is similar to (1)
by using Lemma~\ref{lem:dense-inv-scrambled-set}(2) instead of Lemma~\ref{lem:dense-inv-scrambled-set}(1).
\end{proof}

\begin{proof}[Proof of Theorem~\ref{thm:B}]
The necessity follows by Remark~\ref{rem:invariant-s-s}, so we only need
to show the sufficiency. Assume that $(X,f)$ has a fixed point $p\in X$. Pick a transitive point $x\in X$.
Since $(X,f)$ is non-trivial system, $x\neq p$ and so $x$ is not a periodic point. Thus $X$ is a perfect set.
Now by Lemma~\ref{lem:dense-inv-scrambled-set}(1),
it suffices to show that $\orbp(x,f)$ is strongly scrambled.
It is clear that there exists an increasing sequence $\{k_i\}$
such that $\lim_{i\to\infty}f^{k_i}(x)=p$.
Fix $m,n\in\N_0$. By continuity of $f$ we have $\lim_{i\to\infty}f^{k_i}(f^m(x))=f^m(p)=p$ and $\lim_{i\to\infty}f^{k_i}(f^n(x))=f^n(p)=p$,
which implies that $(f^m(x),f^n(x))$ is proximal.
Similarly, we can show that $(f^m(x),f^n(x))$ is recurrent in $(X\times X,f\times f)$
as $x$ is recurrent in $(X,f)$.
Therefore $(f^m(x),f^n(x))$ is strongly scrambled.
\end{proof}

\begin{rem}
The sufficiency of~Theorem~\ref{thm:B} was also proved in~\cite{YL09} under certain conditions
in the framework of $X$ being a Polish space.
\end{rem}

Now we turn to the question when a dynamical system has an invariant $\delta$-scrambled set for some $\delta>0$.

\begin{lem}\label{lem:invariant-delta}
If a dynamical system $(X,f)$ has an invariant $\delta$-scrambled set for some $\delta>0$, then it is not uniformly rigid.
\end{lem}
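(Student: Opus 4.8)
The plan is to argue by contradiction. Suppose $(X,f)$ is uniformly rigid and that it has an invariant $\delta$-scrambled set $S$ for some $\delta>0$. Fix an increasing sequence $\{n_k\}$ in $\N$ with $\rho_k:=\sup_{z\in X}d(f^{n_k}(z),z)\to 0$. The key idea is to use the rigidity times $n_k$ themselves as ``shifts'' inside $S$: since $S$ is invariant, $f^{n_k}(S)\subset S$, so a point $y\in S$ together with its image $f^{n_k}(y)$ is a pair of points of $S$; once these two points are distinct, the $\delta$-scrambled condition forces $\limsup_{n\to\infty} d(f^n(y),f^{n+n_k}(y))\geq\delta$, whereas uniform rigidity forces the same quantity to be at most $\rho_k$, and letting $k\to\infty$ yields a contradiction.

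To carry this out I first need a point $y\in S$ that is not periodic, so that $f^{n_k}(y)\neq y$ for every $k$. Such a point exists: if some $y\in S$ were periodic but not fixed, letting $q\geq 2$ be its minimal period, then $y$ and $f(y)$ would be two distinct points of $S$, hence $(y,f(y))$ is a proximal pair; but $d(f^n(y),f^{n+1}(y))$ runs over the finite set $\{d(f^i(y),f^{i+1}(y)):0\leq i<q\}$, whose elements are all strictly positive because consecutive points of a $q$-cycle with $q\geq 2$ are distinct, so $\liminf_{n\to\infty}d(f^n(y),f^{n+1}(y))>0$, contradicting proximality. Hence every periodic point of $S$ is fixed. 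Moreover $S$ cannot contain two distinct fixed points $a$ and $b$, since then $d(f^n(a),f^n(b))=d(a,b)>0$ for every $n$, which again contradicts proximality of the scrambled pair $(a,b)$. As $S$ has at least two points, it cannot consist entirely of periodic points, so it contains a non-periodic point $y$.

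Now fix $k$. The points $y$ and $f^{n_k}(y)$ both belong to $S$ (the latter because $f(S)\subset S$, hence $f^{n_k}(S)\subset S$) and are distinct since $y$ is not periodic; therefore the pair $(y,f^{n_k}(y))$ is $\delta$-scrambled, and in particular
\[\limsup_{n\to\infty}d\bigl(f^n(y),f^{n+n_k}(y)\bigr)\geq\delta.\]
On the other hand, for every $n\geq 0$ we have $d(f^n(y),f^{n+n_k}(y))=d\bigl(f^n(y),f^{n_k}(f^n(y))\bigr)\leq\rho_k$, so the $\limsup$ above is at most $\rho_k$. Hence $\delta\leq\rho_k$ for every $k$, which contradicts $\rho_k\to 0$. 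This proves that $(X,f)$ is not uniformly rigid.

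I expect the only genuinely delicate point to be the first step, namely verifying that an invariant $\delta$-scrambled set contains a non-periodic point; once this is settled, the conclusion is immediate from plugging the shift $p=n_k$ into the inequality $\limsup_{n\to\infty}d(f^n(y),f^{n+p}(y))\geq\delta$, which holds for every $p\geq 1$ because $y$ is not periodic and $S$ is $f$-invariant. (Alternatively, one could feed in the already-cited fact that an invariant $\delta$-scrambled set is never compact, but the two short proximality arguments above are needed in any case, so this saves little.)
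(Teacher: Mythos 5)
Your proof is correct and is essentially the paper's argument in contrapositive form: both hinge on picking a non-periodic point of the invariant set $S$, observing that it and its $n$-th image form a $\delta$-scrambled pair, and playing the resulting $\limsup\geq\delta$ against the uniform rigidity bound. The only difference is cosmetic (the paper argues directly that $\sup_{x\in X}d(f^n(x),x)\geq\delta$ for every $n$, while you contradict rigidity along the rigid times $n_k$), and you usefully spell out the ``at most one periodic point in $S$'' claim that the paper states without proof.
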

\begin{proof}
Let $S\subset X$ be an invariant $\delta$-scrambled set. By the definition of $\delta$-scrambled set,
$S$~contains at least two points. Moreover, note that $S$ contains at most one periodic point. Hence we can pick up a point $z\in S$ which is not a periodic point,
and so for every $n\geq 1$, $z,f^n(z)$ are two distinct points in $S$. In particular, the pair
$(z,f^n(z))$ is $\delta$-scrambled for every $n\geq 1$ which yields that
there exists an integer $k\geq 1$ such that $d(f^k(z),f^k(f^n(z)))\geq \delta$.
If we put $x_n=f^k(z)$ then $d(f^n(x_n),x_n)\geq \delta$ which, since $n\geq 1$ was arbitrary, proves that $(X,f)$
is not uniformly rigid.
\end{proof}

\begin{proof}[Proof of Theorem~\ref{thm:C}]
The necessity follows from Remark~\ref{rem:invariant-s-s} and Lemma~\ref{lem:invariant-delta}.
Now we show the sufficiency.
As $(X,f)$ is not uniformly rigid, there is $\delta>0$ such that
for every $n\geq 1$ there exists $x_n\in X$ with $d(f^n(x_n),x_n)\geq \delta$.
Pick a transitive point $x\in X$. By Lemma~\ref{lem:dense-inv-scrambled-set}(2),
it suffices to show that $\orbp(x,f)$ is $\delta$-scrambled.
Fix $m,n\in\N_0$. Then $(f^m(x),f^n(x))$ is proximal as there exists a fixed point in $X$.
Without loss of generality, assume that $m>n$ and put $q=m-n$.
Notice that $f^n(x)$ is also a transitive point. Then there exists an increasing sequence  $\set{k_i}_{i=0}^\infty$
in $\N$
such that $\lim_{i\to\infty}f^{k_i}(f^n(x))=x_q$.
By the continuity of $f$, $\lim_{i\to\infty}f^{k_i}(f^m(x))=f^q(x_q)$.
Then $\limsup_{i\to\infty} d(f^i(f^m(x)),f^i(f^n(x)))\geq d(f^q(x_q),x_q)\geq \delta$
which proves that $(f^m(x),f^n(x))$ is $\delta$-scrambled, completing the proof.
\end{proof}

Since every uniformly rigid system must have zero topological entropy~\cite{GM89},
we have the following corollary.

\begin{cor}
If a  transitive system $(X,f)$ has positive topological entropy, then
$(X,f)$ has a dense Mycielski invariant $\delta$-scrambled set for some $\delta>0$
if and only if it has a fixed point.
\end{cor}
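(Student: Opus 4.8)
The plan is to obtain this corollary as an immediate combination of Theorem~\ref{thm:C} with the cited fact that uniformly rigid systems have zero topological entropy. Since $(X,f)$ is assumed to have positive topological entropy, it is in particular not uniformly rigid (otherwise \cite{GM89} would force zero entropy). Thus the hypothesis ``$(X,f)$ has a fixed point and is not uniformly rigid'' collapses, for systems of positive entropy, to simply ``$(X,f)$ has a fixed point.''

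The argument therefore runs as follows. First I would note that $(X,f)$ is transitive and non-trivial (positive topological entropy forces $X$ to be infinite, hence non-trivial), so Theorem~\ref{thm:C} applies: $(X,f)$ contains a dense Mycielski invariant $\delta$-scrambled set for some $\delta>0$ if and only if it has a fixed point \emph{and} is not uniformly rigid. Second, invoke \cite{GM89}: a uniformly rigid system has zero topological entropy, so since our system has positive entropy it cannot be uniformly rigid. Substituting this into the biconditional of Theorem~\ref{thm:C} removes the ``not uniformly rigid'' clause, leaving exactly the stated equivalence.

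There is essentially no obstacle here; the only points to be careful about are the trivial ones. One should check that positive topological entropy indeed implies $X$ is infinite (finite systems have zero entropy), so that ``non-trivial'' in the hypothesis of Theorem~\ref{thm:C} is automatic and the corollary's conclusion is not vacuous. One should also make sure the ``only if'' direction is handled by the same reasoning: the existence of an invariant $\delta$-scrambled set still forces a fixed point by Remark~\ref{rem:invariant-s-s} (indeed by the full Theorem~\ref{thm:C}), so both implications are covered. In short, the proof is one sentence: by \cite{GM89} a positive-entropy system is not uniformly rigid, so Theorem~\ref{thm:C} gives the claim.

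\begin{proof}
Since $(X,f)$ has positive topological entropy, $X$ is infinite and hence $(X,f)$ is non-trivial.
By~\cite{GM89}, every uniformly rigid system has zero topological entropy, so $(X,f)$ is not uniformly rigid.
Therefore, by Theorem~\ref{thm:C}, $(X,f)$ contains a dense Mycielski invariant $\delta$-scrambled set for some $\delta>0$
if and only if it has a fixed point.
\end{proof}
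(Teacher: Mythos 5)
Your proof is correct and follows the same route as the paper: the corollary is deduced from Theorem~\ref{thm:C} together with the fact from~\cite{GM89} that uniformly rigid systems have zero topological entropy. Your extra observation that positive entropy forces $X$ to be infinite (hence the system is non-trivial) is a harmless and sensible addition.
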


By Lemma~\ref{lem:connected}, if $X$ is not connected and $(X,f)$ is a transitive  system with a fixed point,
then it cannot be pointwise recurrent and thus not uniformly rigid.
This leads us to another corollary to Theorem~\ref{thm:C}.

\begin{cor}
Let $(X,f)$ be a non-trivial transitive system.
If $X$ is not connected, then $(X,f)$ has a dense Mycielski invariant $\delta$-scrambled set for some $\delta>0$
if and only if it has a fixed point.
\end{cor}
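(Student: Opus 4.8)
The plan is to derive the final Corollary directly from Theorem~\ref{thm:C} together with Lemma~\ref{lem:connected}. Since Theorem~\ref{thm:C} already characterizes when a non-trivial transitive system has a dense Mycielski invariant $\delta$-scrambled set---namely, when it has a fixed point and is not uniformly rigid---the entire content of the Corollary reduces to the claim that, under the hypothesis that $X$ is disconnected, a non-trivial transitive system with a fixed point is automatically not uniformly rigid. Thus the equivalence ``has such a set $\iff$ has a fixed point'' follows once we know the extra condition ``not uniformly rigid'' is vacuous in the disconnected case.

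First I would handle the forward implication of the Corollary trivially: if $(X,f)$ has a dense Mycielski invariant $\delta$-scrambled set, then in particular it has an invariant $\delta$-scrambled set, and by Remark~\ref{rem:invariant-s-s} (applied via Theorem~\ref{thm:C}'s necessity, which already invokes that remark) it has a fixed point. Next, for the converse, I would assume $(X,f)$ is non-trivial, transitive, has a fixed point, and that $X$ is disconnected, and argue that it is not uniformly rigid. Here is where Lemma~\ref{lem:connected} enters: suppose toward a contradiction that $(X,f)$ is uniformly rigid. Every uniformly rigid system is pointwise recurrent (stated in the Preliminaries right after the definition of uniform rigidity). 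A fixed point is a one-point minimal set, which is connected, so $(X,f)$ is a transitive, pointwise recurrent system containing a connected minimal set; by Lemma~\ref{lem:connected}, $X$ is connected, contradicting our assumption. Hence $(X,f)$ is not uniformly rigid, and now Theorem~\ref{thm:C} applies to give the dense Mycielski invariant $\delta$-scrambled set.

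I do not anticipate any genuine obstacle here: the argument is a short deduction, and the only point requiring minor care is verifying that the implication chain ``fixed point $\Rightarrow$ singleton minimal set $\Rightarrow$ connected minimal subsystem'' legitimately feeds Lemma~\ref{lem:connected}, which it does since a singleton is a minimal set and is trivially connected. One might also note, as the paragraph preceding the Corollary already observes, that this ``disconnected $\Rightarrow$ not pointwise recurrent when a fixed point exists'' phenomenon is exactly the combinatorial heart of the matter, so the proof is essentially a one-line invocation of that observation plus Theorem~\ref{thm:C}. I would write it out in roughly four or five sentences.

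\begin{proof}
The forward implication is immediate: if $(X,f)$ has an invariant $\delta$-scrambled set for some $\delta>0$, then by Remark~\ref{rem:invariant-s-s} it has a fixed point. Conversely, assume $(X,f)$ is non-trivial and transitive, has a fixed point $p$, and $X$ is not connected. We claim $(X,f)$ is not uniformly rigid. Indeed, if it were uniformly rigid, then it would be pointwise recurrent; since the singleton $\{p\}$ is a connected minimal subset of $X$, Lemma~\ref{lem:connected} would force $X$ to be connected, a contradiction. Therefore $(X,f)$ is not uniformly rigid, and by Theorem~\ref{thm:C} it has a dense Mycielski invariant $\delta$-scrambled set for some $\delta>0$.
\end{proof}
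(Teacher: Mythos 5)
Your proof is correct and follows exactly the paper's route: the paragraph preceding the corollary argues via Lemma~\ref{lem:connected} that a disconnected transitive system with a fixed point (a connected singleton minimal set) cannot be pointwise recurrent, hence not uniformly rigid, so Theorem~\ref{thm:C} applies. Nothing is missing; your write-up just makes that one-line observation explicit as a contradiction argument.
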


Recall that a dynamical $(X,f)$ is \emph{mildly mixing} if for every transitive  system $(Y,g)$,
the product system $(X\times Y,f\times g)$ is transitive.
Note that a non-trivial mildly mixing system cannot be uniformly rigid~\cite{GW06,HY04}.
Therefore, we easily obtain another result related to Theorem~\ref{thm:C}.

\begin{cor}
If a non-trivial dynamical system $(X,f)$ is mildly mixing,
then $(X,f)$ has a dense Mycielski invariant $\delta$-scrambled set for some $\delta>0$
if and only if it has a fixed point.
\end{cor}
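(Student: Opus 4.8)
The plan is to obtain this corollary as an immediate consequence of Theorem~\ref{thm:C}, so the only work is to verify that the two hypotheses of that theorem — transitivity and failure of uniform rigidity — are automatically available under the standing assumptions.

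First I would record that every mildly mixing system is transitive. Applying the defining property of mild mixing to the (trivially transitive) one-point system $(Y,g)$, the product $(X\times Y,f\times g)$ is transitive; but this product is conjugate to $(X,f)$, so $(X,f)$ itself is transitive. (It is also well known that mild mixing implies weak mixing, which in turn implies transitivity, but the one-point-system observation already suffices.) Hence $(X,f)$ is a non-trivial transitive system, and Theorem~\ref{thm:C} applies to it.

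Second, I would invoke the fact recalled in the introduction — due to Glasner--Weiss~\cite{GW06} and independently Huang--Ye~\cite{HY04} — that a dynamical system which is simultaneously mildly mixing and uniformly rigid must be trivial. Since $(X,f)$ is assumed non-trivial and mildly mixing, it follows that $(X,f)$ is not uniformly rigid. Substituting this into the biconditional of Theorem~\ref{thm:C}, the condition ``$(X,f)$ has a fixed point and is not uniformly rigid'' simplifies to ``$(X,f)$ has a fixed point'', and we conclude that $(X,f)$ contains a dense Mycielski invariant $\delta$-scrambled set for some $\delta>0$ if and only if it has a fixed point.

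There is no genuine obstacle here: the corollary is a direct substitution into Theorem~\ref{thm:C}. The only step one should not omit is the verification that mild mixing forces both transitivity and the negation of uniform rigidity, and each of these is either elementary (the one-point-system trick) or quotable from the literature (the rigidity obstruction of~\cite{GW06,HY04}).
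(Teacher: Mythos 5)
Your proposal is correct and follows exactly the paper's route: the corollary is read off from Theorem~\ref{thm:C} together with the quoted fact from~\cite{GW06,HY04} that a non-trivial mildly mixing system cannot be uniformly rigid (the paper leaves the transitivity of mildly mixing systems implicit, which your one-point-system observation makes explicit). Nothing is missing.
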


The authors in~\cite{BGO10} conjecture that there exists a weakly mixing system with a fixed point
but without invariant $\delta$-scrambled sets.
By Theorem~\ref{thm:C}, it suffices to construct
a weakly mixing system with a fixed point which is also uniformly rigid.
In fact, we will construct two kinds of completely scrambled systems which are weakly mixing, proximal and uniformly rigid.
We show below the existence of such a system can be derived from results of Akin and Glasner~\cite{AG}.
As we will see the proof obtained as a combination of abstract arguments, which does not leave
too much space for modification (and also involve some other nontrivial constructions, e.g. monothetic groups with fixed point on compacta property~\cite{Gla}). In the next section we will show how construction from~\cite{KW} can be modified to construct systems whose existence is proved in Theorem~\ref{thm:A}.

Before proving Theorem~\ref{thm:A}, we need some more definitions. Given a compact metric space $(X,d)$
we denote by $C(X)$ the space of all continuous maps $f\colon X\to X$
endowed with a complete metric $\rho(f,g)=\sup_{x\in X} d(f(x),g(x))$.
Recall that a dynamical system $(X,f)$ is \emph{scattering} if
the product system $(X\times Y, f\times g)$ is transitive for every minimal system $(Y,g)$.

\begin{proof}[Proof of Theorem~\ref{thm:A}]
By Corollary~4.14(a) in~\cite{AG} there exists a non-trivial almost equicontinuous scattering system $(Y,g)$.
Since $(Y,g)$ is scattering, $(Y,g)$ is not equicontinuous and then also cannot be minimal.
Denote by $\Lambda_g$ the closure of $G=\set{g^n : n\geq 0}$ in $C(X)$.
By the definition $G$ is dense in $\Lambda_g$ and since $(Y,g)$
is uniformly rigid, there is an increasing sequence $\set{n_i}_{i=0}^\infty$ such that $\lim_{i\to \infty}\rho(g^{n_i},id)=0$.
In the language of \cite{AG}, the pair $(\Lambda_g,g)$ is a recurrent pointed monothetic group.
Note that in a complete metric space a set is totally bounded if its closure is compact (e.g. see \cite[Corollary~XIV.3.6]{Dug}).
If we denote by $\Trans_g$ the set of transitive point of $(Y,g)$,
then it coincides with points of equicontinuity for $g$ because $g$ is transitive.
By \cite[Theorem~4.2(5)]{AG} and \cite[Theorem~4.3]{AG}, we obtain that if $\Lambda_g$ is compact then so is $\Trans_g$
(in metric compatible with the topology on $Y$), which means that $Y=\Trans_g$, that is $(Y,g)$ is minimal.
This is a contradiction, and then $\Lambda_g$ is not totally bounded.

Now applying \cite[Corollary~4.19]{AG} to the recurrent pointed but not totally bounded monothetic group $(\Lambda_g,g)$,
we obtain a non-trivial weak mixing dynamical system $(Z,h)$ which extends to $(\Lambda_g,g)$,
that is there is a continuous homomorphism $\pi \colon \Lambda_g \to C(Z)$ such that $\pi(g)=h$.
But clearly, every subsystem in $Z$ extends to $(\Lambda_g,g)$ (e.g. see \cite[Lemma~4.4]{AG})
and
so by \cite[Corollary~4.14(a)(2)]{AG} all minimal subsets in $Z$ are singletons.
Since $\lim_{i\to \infty}\rho(g^{n_i},id)=0$ and $\pi$ is a continuous homomorphism,
we also have $\lim_{i\to \infty}\rho(h^{n_i},id)=0$, which means that $(Z,h)$ is uniformly rigid.
Denote by $M$ the closure of the set of fixed points in $Z$ and note that $M\neq Z$.
Let $R=M\times M \cup \Delta_Z$.  Then $R$ is a closed $h\times h$-invariant  equivalence relation in $Z$.
Then we can collapse $M$ to a single point, that is we take the quotient space
$X=Z/R$ and denote by $f$ the map induced by $h$ on $X$.
Since $(X,f)$ is a factor of $(Z,h)$, it is weakly mixing and uniformly rigid.
The fixed point $[M]_R$ is the unique minimal point in $(X,f)$, which means that $(X,f)$ is proximal.
Then $(X,f)$ is as required.
\end{proof}

\section{Weakly mixing, proximal and uniformly rigid systems: Katznelson-Weiss technique}

In their interesting paper \cite{KW}, Katznelson and Weiss outlined a method of construction
of uniformly rigid systems. This technique was extended in \cite{AAB}, where it is shown that while
these systems are always uniformly rigid, they may or may not be almost equicontinuous.
We will show how this technique can be adjusted, to obtain an uniformly rigid system
with some additional dynamical properties, such
as weak mixing etc. This will provide another proof of Theorem~\ref{thm:A}, which has better insight into the dynamics and more freedom in construction of another examples.

The above mentioned systems of Katznelson and Weiss are subsystems of the Bebutov system, that is the Hilbert cube $\mathbb{I}=[0,1]^{\N_0}$ with the metric (which generates the product topology)
$$
d(x,y) = \sum_{n=0}^{\infty}\frac{|x(n)-y(n)|}{2^n},
$$
and the left shift $\sigma$ defined by $\sigma(x)(n)=x(n+1)$.
Roughly speaking, authors of \cite{KW} define $X$ to be the closure of the orbit of a point $\alpha\in \mathbb{I}$ defined as $\alpha(n)=a_\infty(n)$,
where
$$
a_\infty\colon \R \to [0,1], \quad a_\infty(t)=\sup_{j}a_{p_j}(t),
$$
for properly chosen increasing sequences of positive integers $p_j$, and where
$a_p(t)=a_1(t/p)$ for integers $p>1$ and $a_1$
is a periodic repetition (i.e. $a_1(t+2)=a_1(t)$) of the graph of a function $a_0\colon [-1,1]\to [0,1]$ satisfying $a_0(1)=a_0(-1)$ and $|a_0(s)-a_0(t)|\leq L|s-t|$
for some $L\geq 2$ (see~\cite{AAB} for more details).

It is proved in~\cite{AAB} that the above construction leads to a uniformly rigid system
and that its unique minimal point is the constant sequence of symbol $1$ (denoted $1^\infty$).
Furthermore, if $a_0(0)\neq 1$ then $(X,\sigma)$ is not minimal (in fact, it is an infinite proximal system).
Sequences $\alpha$ constructed by the above procedure are called \emph{WK sequences} in~\cite{AAB}.
Another nice feature of $(X,\sigma)$ proved in \cite{AAB} is that if $a_0$ has strict minimum in $0$,
then $(X,\sigma)$ is almost equicontinuous.

Usually in symbolic dynamics a recurrent (or minimal) point is constructed by a consecutive concatenations of blocks. In the case of WK sequences the technique is
slightly different. Instead of concatenation, we adjust values of a function in properly chosen positions. It is a way to ensure uniform rigidity.
Unfortunately, periodicity condition in original construction puts some restrictions on the sequence.
To get weak mixing systems, we should break the periodicity condition on $a_p$, e.g. by changing functions in each step.

The following result from~
\cite[Lemma 2.1]{HYBeb} is more convenient
to verify a subsystem of the Bebutov system being uniformly rigid.
\begin{lem}\label{lem:urec}
Assume that $(Y,\sigma)$ is a transitive subsystem of $([0,1]^{\N_0},\sigma)$
and that $y\in Y$ is a transitive point.
Then $(Y,\sigma)$ is uniformly rigid if and only if there is an increasing sequence $\set{n_i}$
such that for each $\eps>0$ there is $N>0$ with $|y(n_i+j)-y(j)|<\eps$ for all $i>N$ and $j\geq 0$.
\end{lem}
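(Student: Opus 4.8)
The plan is to prove the two implications separately, exploiting the explicit weighted form of the metric $d$ on $[0,1]^{\N_0}$ together with the shift-invariance of $Y$, and using the density of the orbit of the transitive point $y$.

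For the forward implication, I would assume $(Y,\sigma)$ is uniformly rigid, so that there is an increasing sequence $\{n_i\}$ with $\lim_{i\to\infty}\sup_{z\in Y}d(\sigma^{n_i}(z),z)=0$. Given $\eps>0$, pick $N$ such that $\sup_{z\in Y}d(\sigma^{n_i}(z),z)<\eps$ for every $i>N$. For any $j\geq 0$ the point $\sigma^j(y)$ belongs to $Y$ since $Y$ is invariant, and the coordinate-zero term of $d(\sigma^{n_i}(\sigma^j(y)),\sigma^j(y))$ equals $|y(n_i+j)-y(j)|$; as this term is dominated by the whole sum defining $d$, we obtain $|y(n_i+j)-y(j)|<\eps$ for all $i>N$ and all $j\geq 0$, which is exactly the stated condition. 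Note that transitivity of $y$ plays no role in this direction.

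For the reverse implication, I would assume the increasing sequence $\{n_i\}$ has the stated property and deduce uniform rigidity. Fix $\eps>0$ and choose $N$ with $|y(n_i+\ell)-y(\ell)|<\eps$ for all $i>N$ and $\ell\geq 0$. For any $m\geq 0$ and $i>N$, applying the estimate with $\ell=m+j$ yields $|\sigma^{n_i}(\sigma^m(y))(j)-\sigma^m(y)(j)|=|y(n_i+m+j)-y(m+j)|<\eps$ for every $j\geq 0$, hence $d(\sigma^{n_i}(\sigma^m(y)),\sigma^m(y))\leq\eps\sum_{j\geq 0}2^{-j}=2\eps$. Thus the continuous function $z\mapsto d(\sigma^{n_i}(z),z)$ on the compact space $Y$ is bounded by $2\eps$ on the orbit $\orbp(y,\sigma)$, which is dense in $Y$ because $y$ is a transitive point, and therefore it is bounded by $2\eps$ on all of $Y$. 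Consequently $\sup_{z\in Y}d(\sigma^{n_i}(z),z)\leq 2\eps$ for every $i>N$, and since $\eps>0$ was arbitrary, $(Y,\sigma)$ is uniformly rigid.

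I do not expect a genuine obstacle here. The only points requiring a little care are: (i) in the forward direction, one must read off uniform rigidity at the shifted points $\sigma^j(y)$, not at $y$ itself, in order to control the $j$-th coordinate of $y$ uniformly in $j$; and (ii) in the reverse direction, one transfers the sup-metric estimate from the dense orbit of $y$ to all of $Y$ using continuity of $z\mapsto d(\sigma^{n_i}(z),z)$ and compactness. Both are routine once one keeps track of the weighted structure of $d$.
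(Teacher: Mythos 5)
Your proof is correct, and both implications are carried out with the right amount of care: reading off the condition from the zeroth coordinate at the shifted points $\sigma^j(y)$ in the forward direction, and passing from the dense orbit of $y$ to all of $Y$ via continuity of $z\mapsto d(\sigma^{n_i}(z),z)$ in the reverse direction. Note that the paper itself gives no proof of this lemma (it is quoted from Lemma~2.1 of the Huang--Ye reference \cite{HYBeb}), so your argument serves as a self-contained verification, and it is the standard one: the only features of the setting it uses are the weighted product metric, invariance of $Y$, compactness, and density of $\orbp(y,\sigma)$ in $Y$.
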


Now we will proceed with a more concrete construction of a system announced in Theorem~\ref{thm:A}.
At one hand we have to ensure kind of ``periodic'' condition in Lemma~\ref{lem:urec}. On the other
hand we need to ensure the condition of Lemma~\ref{lem:WM} to get weak mixing, which means that any prefix of $\alpha$ must appear
in $\alpha$ with gaps of length $n$ and $n+1$ for some $n$. Then, instead of periodic structure of maps in original
Katznelson-Weiss construction, we need to incorporate a kind of ``shift'' by one position. Roughly speaking, we will make such shift
over ``flat" blocks in $a_\infty$ with value $1$ since on these positions our ``shift'' has no influence on value of $a_\infty$
and so does not break the condition in Lemma~\ref{lem:urec}. Of course when ``leaving'' flat regions of value $1$, we need to keep again periodicity of values.
We hope that this informal explanation will help the reader to follow our construction easier and apply it successfully in other situations.

Let $a_0\colon \mathbb{R}\to [0,1]$ be defined as
\[
a_0(t)=\begin{cases}
  0, & \text{ for } t\in [-1,1],\\
  1, & \text{ for } t\in [-3,-2]\cup [2,3],\\
  -t-1,&\text{ for } t\in[-2,-1],\\
  t-1,& \text{ for } t\in[1,2]\\
  0, & \text{ for }t\not \in [-3,3].
\end{cases}
\]

Put $p_0=3$ and for $n=1,2,\ldots$, we define $L_{n}\geq p_{n-1}^2$ and $p_n=p_0^2 L_n p_{n-1}$.
If we put $\eps_n=1/n$ then clearly $1/L_n <\eps_n$.

We put $b_0(t)=a_0(t)$ for $t\in [-p_0,p_0]$ and extend it to the whole $\R$ by
putting $b_0(t+2p_0)=b_0(t)$.
For $n=1,2,\ldots$, we define a function $c_n\colon \R \to [0,1]$ by
$c_n(t)=b_0(t/(p_{n-1}L_{n}))$.

If $a_n\colon \mathbb{R}\to[0,1]$ is defined, then we
put $b_n(t)=a_n(t)$ for $t\in [-p_n,p_n]$ and extend it to the whole $\R$ by putting $b_n(t+2p_n)=b_n(t)$.
Finally, we define $a_{n+1}\colon \R\to [0,1]$ by
\[
a_{n+1}(t)=\begin{cases}
\max\set{b_n(t),c_{n+1}(t)}, &\text{ for }t\in [-p_{n+1}, p_0 L_{n+1} p_n],\\
\max\set{b_n(t+1),c_{n+1}(t)}, &\text{ for }t\in (p_0 L_{n+1} p_n,p_{n+1}],\\
0, &\text{ for } t\not \in [-p_{n+1},p_{n+1}].
\end{cases}
\]

When we finish the above recursive procedure, we define a function $a_{\infty}\colon \R \to [0,1]$
by the formula
\[a_\infty(t)=\sup_{n\in\N_0} a_n(t),\quad \text{ for } t\in\R.\]

\begin{defn}
Using the above construction, we define $\alpha\in [0,1]^{\N_0}$ putting $\alpha(n)=a_{\infty}(n)$.
We denote $\mathbb{X} = \overline{\orbp(\alpha,\sigma)}\subset [0,1]^{\N_0}$.
\end{defn}
We are going the prove the following result.
\begin{thm}\label{thm:main_wm}
The dynamical system $(\mathbb{X},\sigma)$ is weakly mixing, proximal and uniformly rigid.
\end{thm}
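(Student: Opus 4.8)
The plan is to verify the three properties separately, each using the combinatorial structure of $\alpha$ that is built into the recursive construction. The common tool is that $(\mathbb{X},\sigma)$ is transitive by definition (it is the orbit closure of $\alpha$, and $\alpha$ is a transitive point), so Lemmas~\ref{lem:WM} and \ref{lem:urec} apply directly with $y=\alpha$.

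\emph{Uniform rigidity.} First I would apply Lemma~\ref{lem:urec} with the sequence $n_i=2p_i$ (the periods used in the construction of $b_i$). The key estimate is that for $j$ in the relevant range, $a_\infty(j+2p_i)$ and $a_\infty(j)$ differ only because of the ``shift by one'' inserted at stage $n+1$ on the interval $(p_0L_{n+1}p_n,\,p_{n+1}]$ for $n\geq i$, together with the contribution of $c_{n+1}$ for $n\ge i$. The shift only acts where $b_n$ has value $1$ (the flat regions), so it does not change $a_\infty$ there; and $c_{n+1}(t)=b_0(t/(p_nL_{n+1}))$ has Lipschitz constant $1/(p_nL_{n+1})$, so moving $t$ by $2p_i\leq 2p_n$ changes $c_{n+1}$ by at most $2/L_{n+1}<2\eps_{n+1}$. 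Summing the tail over $n\ge i$ gives a bound tending to $0$, which is exactly the condition of Lemma~\ref{lem:urec}. The careful bookkeeping of \emph{which} terms in the $\sup$ defining $a_\infty$ are affected by a shift of $2p_i$, and checking that on the flat-$1$ regions the shift is genuinely invisible, is the technical heart of this part.

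\emph{Proximality.} Here I would argue that the constant sequence $1^\infty$ is the unique minimal point of $(\mathbb{X},\sigma)$, and then invoke the Proposition characterizing proximal systems (condition (3)). That $1^\infty\in\mathbb{X}$ and that it is a fixed point is clear once one checks that arbitrarily long flat blocks of $1$'s occur in $\alpha$ (they are produced by the periodic extensions $b_n$, whose defining function $a_0$ takes value $1$ on intervals of length $1$, scaled up by $p_{n-1}L_n$). For uniqueness of the minimal set one shows that every point of $\mathbb{X}$ has $1^\infty$ in its orbit closure: any subword of $\alpha$ is, up to the negligible influence of the shifts and of $c_{n+1}$, followed after a bounded gap by a long flat block of $1$'s, forcing $N(x,U)$ to be syndetic for every neighborhood $U$ of $1^\infty$; then Lemma~\ref{lem:proximal} gives that every minimal set lies in $\{1^\infty\}$. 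Since $a_0(0)=0\neq 1$, the system is non-trivial (it is an infinite proximal system), as remarked for WK sequences.

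\emph{Weak mixing.} Finally I would apply Lemma~\ref{lem:WM}: it suffices to show that for every $k$ there is $n$ such that both $n$ and $n+1$ lie in $N(U,U)$ where $U$ is the cylinder determined by $\alpha(0),\dots,\alpha(k-1)$. This is precisely where the ``shift by one'' built into $a_{n+1}$ on the interval $(p_0L_{n+1}p_n,\,p_{n+1}]$ is used: the periodic block $b_n$ of length $2p_n$ (which contains the prefix $\alpha(0)\dots\alpha(k-1)$ once $p_n$ is large enough) reappears inside $a_{n+1}$ once with offset $0$ and once with offset $1$, so the prefix of $\alpha$ recurs with two consecutive gaps. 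One must check that the maximum with $c_{n+1}$ does not destroy the prefix — true because on the flat-$0$ region $[-p_0,p_0]$ of $a_0$, $c_{n+1}$ also vanishes for small arguments, so near the relevant positions the value of $a_{n+1}$ is still governed by $b_n$. I expect the main obstacle to be organizing these three verifications so that the single construction simultaneously satisfies the ``shift'' needed for Lemma~\ref{lem:WM} and the ``near-periodicity'' needed for Lemma~\ref{lem:urec}; concretely, the delicate point is proving that inserting the one-position shift over flat-$1$ blocks really leaves $a_\infty$ unchanged on those blocks while still propagating a genuine shift of the prefix, so that weak mixing and uniform rigidity do not conflict.
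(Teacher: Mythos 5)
Your plan follows the paper's own proof essentially step for step: uniform rigidity via Lemma~\ref{lem:urec} applied at the times $2p_i$ using the near-periodicity of the functions $b_m$, proximality via Lemma~\ref{lem:proximal} and the syndetic occurrence of long blocks of symbol $1$ (so that $1^\infty$ is the unique minimal, indeed fixed, point), and weak mixing via Lemma~\ref{lem:WM} using the inserted one-position shift to produce returns of the prefix at two consecutive times. Two small points to fix when writing it up: at the insertion seams the mismatch is masked because $c_{n+1}$ (not $b_n$) is identically $1$ on the relevant junction regions, and the error bookkeeping should use the max-estimate of Remark~\ref{rem_11} (or at least the bound $2/L_{n+1}$, whose tail sums converge) rather than summing the bounds $2\eps_{n+1}=2/(n+1)$, whose series diverges.
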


\begin{rem}\label{rem_11}
Observe that for any sequence $\set{x_i}_{i=0}^\infty, \set{y_i}_{i=0}^\infty \subset [0,1]$
we have
\[
|\sup_{i} x_i - \sup_i y_i|\leq \sup_i |x_i-y_i|.\]
\end{rem}

\begin{rem}
For any $t\in \R$, $s>0$ and $n\in\N$, directly form the definition we have that
\begin{align*}
|b_0(t+s)-b_{0}(t)|&\leq s,\\
|c_n(t+s)-c_{n}(t)|&=|b_0((t+s)/(p_{n-1}L_{n}))-b_{0}(t/(p_{n-1}L_{n}))|\leq s/(p_{n-1}L_{n})\\
&\leq s\eps_n/p_{n-1}.
\end{align*}
\end{rem}

\begin{lem}\label{lem:copy_up}
For any $0\leq n\leq m$ and any $t\in [-p_n,p_n]$ we have $a_\infty(t)=a_m(t)=a_n(t)$.
\end{lem}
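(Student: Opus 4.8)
The plan is to prove the two equalities separately: first that $a_m(t)=a_n(t)$ for all $m\geq n$ on $[-p_n,p_n]$, by induction on $m$, and then that $a_\infty(t)=a_n(t)$ there, by analysing the defining supremum. The single computation on which everything rests is that $c_{n+1}$ vanishes on $[-p_n,p_n]$. Indeed, for $t\in[-p_n,p_n]$ one has $t/(p_nL_{n+1})\in[-1/L_{n+1},1/L_{n+1}]\subseteq[-1,1]$ because $L_{n+1}\geq p_n^2\geq 1$; since $b_0=a_0$ on $[-p_0,p_0]$ and $a_0\equiv 0$ on $[-1,1]$, it follows that $c_{n+1}(t)=b_0(t/(p_nL_{n+1}))=0$. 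This is precisely the mechanism ensuring that each newly added ``coarse'' term does not disturb the values already fixed on the central block.

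For the first equality I would argue by induction on $m\geq n$, the case $m=n$ being trivial. Assuming $a_m(t)=a_n(t)$ for $t\in[-p_n,p_n]$, note that $p_n\leq p_m$ gives $[-p_n,p_n]\subseteq[-p_m,p_m]$, so $b_m(t)=a_m(t)=a_n(t)$ on $[-p_n,p_n]$. Moreover $[-p_n,p_n]\subseteq[-p_{m+1},p_0L_{m+1}p_m]$ (the left endpoint since $p_{m+1}>p_n$, the right since $p_0L_{m+1}\geq 1$), so the first branch of the definition of $a_{m+1}$ applies, and using the vanishing of $c_{m+1}$ noted above,
\[
a_{m+1}(t)=\max\{b_m(t),c_{m+1}(t)\}=\max\{a_n(t),0\}=a_n(t),
\]
which closes the induction. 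In particular the shift by one appearing in the second branch is never encountered on $[-p_n,p_n]$.

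For the second equality, write $a_\infty(t)=\sup_{k\in\N_0}a_k(t)=\max\{\sup_{k\geq n}a_k(t),\sup_{k<n}a_k(t)\}$. By the first part $\sup_{k\geq n}a_k(t)=a_n(t)$, so it remains to check $a_k(t)\leq a_n(t)$ for every $k<n$ and $t\in[-p_n,p_n]$. I would split according to the size of $t$. If $|t|>p_k$, then $a_k(t)=0\leq a_n(t)$ directly, since each $a_k$ vanishes outside $[-p_k,p_k]$ and all values lie in $[0,1]$. If $|t|\leq p_k$, then the monotonicity relation $a_j(t)\leq a_{j+1}(t)$ holds on $[-p_j,p_j]$ for each $j$ (on that interval the first branch gives $a_{j+1}(t)=\max\{b_j(t),c_{j+1}(t)\}\geq b_j(t)=a_j(t)$); chaining this from $j=k$ up to $j=n-1$ along the nested intervals $[-p_k,p_k]\subseteq[-p_j,p_j]$ yields $a_k(t)\leq a_n(t)$. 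Hence $\sup_{k<n}a_k(t)\leq a_n(t)$ and $a_\infty(t)=a_n(t)$.

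I expect the crux to be the vanishing of $c_{n+1}$ on $[-p_n,p_n]$ together with the bookkeeping of interval inclusions that keeps us inside the first, unshifted branch of the recursion; both rely on the growth conditions $L_n\geq p_{n-1}^2$ and $p_n=p_0^2L_np_{n-1}$. Once these are in place, both equalities follow mechanically, and no genuinely new estimate is needed.
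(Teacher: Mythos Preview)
Your proof is correct and follows essentially the same approach as the paper: both hinge on the vanishing of $c_{m+1}$ on $[-p_m,p_m]$, run the same induction for $a_m(t)=a_n(t)$, and then handle $a_\infty$ by splitting according to where $t$ sits among the nested intervals $[-p_k,p_k]$. The only cosmetic difference is in the second half: the paper locates the unique $k$ with $t\in[-p_{k+1},p_{k+1}]\setminus[-p_k,p_k]$ and observes that all $a_i(t)$ are either $0$ or equal to $a_{k+1}(t)$, whereas you fix $k<n$ and bound $a_k(t)\le a_n(t)$ via the monotonicity chain; note that your chain could be shortened by simply invoking the first equality with $(k,n)$ in place of $(n,m)$ to get $a_k(t)=a_n(t)$ directly when $|t|\le p_k$.
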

\begin{proof}
Fix any $t\in [-p_n,p_n]$.
We first show that $a_m(t)=a_n(t)$.
If $m=n$ it has noting to prove. Now assume that $n<m$.
Since $t\in [-p_n,p_n]$, $t/(p_nL_{n+1})\in[-1,1]$ and so $c_{n+1}(t)=0$.
Then $a_{n+1}(t)=\max \set{b_n(t),c_{n+1}(t)}=b_n(t)=a_n(t)$.
By induction, assume that we have proved that $a_{m-1}(t)=a_n(t)$.
Again by $t/(p_{m-1}L_{m})\in[-1,1]$, we have $c_{m}(t)=0$.
Then $a_{m}(t)=\max \set{b_{m-1}(t),c_{m}(t)}=b_{m-1}(t)=a_{m-1}(t)=a_{n}(t)$.

Now we show that $a_\infty(t)=a_n(t)$.
If $t\in [-p_0,p_0]$ then for every $i\geq 0$ $a_i(t)=a_0(t)$ and so the result follows.
If $t\not\in [-p_0,p_0]$ then let $0\leq k<n$ be such that
$t\in [-p_{k+1},p_{k+1}]\setminus [-p_{k},p_{k}]$. Then $a_i(t)=0$ for every $i\leq k$
and $a_i(t)=a_k(t)$ for every $i\geq {k+1}$.
Thus, $a_\infty(t)=\sup_{i} a_i(t)=a_n(t)$.
\end{proof}

\begin{lem}\label{lem:copy_extend}
For any $0\leq n \leq m$ and any $t\in\R$ we have $\vert b_m(t+2p_n)-b_m(t)\vert<\eps_n$.
\end{lem}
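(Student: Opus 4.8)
\textbf{Plan of proof for Lemma~\ref{lem:copy_extend}.}
The statement asserts that the ``periodic repetition'' of $a_m$ with period $2p_m$ (namely $b_m$) is also almost periodic of period $2p_n$ up to error $\eps_n$, for every $n\le m$. I would prove this by induction on $m$, with base case $m=n$ and inductive step $m\to m+1$, always keeping $n$ fixed. For the base case $m=n$: since $b_n(t+2p_n)=b_n(t)$ by construction, the difference is $0<\eps_n$. (More generally for $m=0$ one unwinds the definition of $b_0$, but the genuine base of the induction is $m=n$, where the periodicity is exact.)

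For the inductive step, suppose the estimate holds for $m$ (with $n\le m$), and consider $b_{m+1}$, which is the $2p_{m+1}$-periodic extension of $a_{m+1}$, and $a_{m+1}(t)=\max\{b_m(t'),c_{m+1}(t)\}$ where $t'$ equals $t$ or $t+1$ depending on whether $t\in[-p_{m+1},p_0L_{m+1}p_m]$ or $t\in(p_0L_{m+1}p_m,p_{m+1}]$ (and $a_{m+1}\equiv 0$ outside $[-p_{m+1},p_{m+1}]$). The plan is to use Remark~\ref{rem_11}: shifting $t$ by $2p_n$ changes the max of two quantities by at most the max of the two individual changes. The change in $c_{m+1}$ is controlled by the second displayed remark: $|c_{m+1}(t+2p_n)-c_{m+1}(t)|\le 2p_n\eps_{m+1}/p_m$, and since $p_n\le p_m$ and $\eps_{m+1}<\eps_n$ (as $n\le m<m+1$), this is at most $2\eps_n$ — actually I need to sharpen this to stay below $\eps_n$, so I would use the divisibility $p_m=p_0^2L_mp_{m-1}$ to get $2p_n/p_m$ very small (at most $2/(p_0^2L_m)\le 2/p_0^2 < 1$, improvable as needed), hence $|c_{m+1}(t+2p_n)-c_{m+1}(t)|$ is negligible compared with $\eps_n$. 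The change in the $b_m$-term is $|b_m(t'+2p_n)-b_m(t')|<\eps_n$ by the inductive hypothesis, provided the shift-index $t\mapsto t'$ behaves consistently, i.e. both $t$ and $t+2p_n$ fall into the same one of the two regions (so that the same ``$+0$'' or ``$+1$'' shift is applied to both); this is where the geometry of the construction must be invoked.

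\textbf{Main obstacle.} The delicate point is precisely the boundary between the two regions of the definition of $a_{m+1}$, at $t=p_0L_{m+1}p_m$, and the two region endpoints $\pm p_{m+1}$: if $t$ lies just below $p_0L_{m+1}p_m$ but $t+2p_n$ lies just above it, then one compares $b_m(t+2p_n+1)$ with $b_m(t)$, not $b_m(t+2p_n)$ with $b_m(t)$, and the inductive bound does not directly apply. The resolution I would pursue is that near these three transition points the relevant function values are all equal to $1$ (this is the whole design principle quoted in the prose preceding the construction — the ``shift'' by one position is inserted over flat blocks of value $1$ in $a_\infty$, and the periodic extensions are arranged to match value $1$ at the period boundaries $\pm p_{m+1}$): concretely one checks from the explicit formula for $a_0$ (hence $b_0$, hence $c_{m+1}$) together with Lemma~\ref{lem:copy_up} that $b_m$ takes value $1$ on a neighbourhood of radius $\gg 2p_n$ around $p_0L_{m+1}p_m$ and around $\pm p_{m+1}$, so that whenever $t$ and $t+2p_n$ straddle a transition point, both $b_m(t')$ and $b_m((t+2p_n)')$ equal $1$ and the $b_m$-contribution to the difference is $0$. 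Away from all transition points the region-index is locally constant, the inductive hypothesis applies verbatim, and combining the two estimates via Remark~\ref{rem_11} gives a bound strictly below $\eps_n$. Finally, for $t$ with $|t|>p_{m+1}$ (so $a_{m+1}(t)=0$) one handles the periodic extension $b_{m+1}$ by reducing modulo $2p_{m+1}$ into $[-p_{m+1},p_{m+1}]$ and noting that $2p_n$ divides $2p_{m+1}$, so the reduction commutes with the shift by $2p_n$; this reduces everything to the case already treated.
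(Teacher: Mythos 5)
Your overall strategy is exactly the paper's: induction on $m$ with $n$ fixed, exact $2p_n$-periodicity at the base case $m=n$, Remark~\ref{rem_11} combined with the Lipschitz-type bound on $c_{m+1}$ on the two regions where $t$ and $t+2p_n$ receive the same shift, reduction modulo $2p_{m+1}$, and a separate treatment of the windows straddling $p_0L_{m+1}p_m$ and $p_{m+1}$. The gap is in how you resolve the straddling case. You claim that $b_m$ equals $1$ on a neighbourhood of radius much larger than $2p_n$ around $p_0L_{m+1}p_m$ and around $\pm p_{m+1}$, so that the $b_m$-contribution to the difference vanishes there. This claim is false in general. First, the lemma must cover $n=m$ (this is the very first inductive step), and then the straddling window has radius $2p_n=2p_m$, i.e.\ it contains a full period of $b_m$; since $b_m$ is not identically $1$ (for instance $b_m(0)=a_m(0)=a_0(0)=0$ by Lemma~\ref{lem:copy_up}), it cannot equal $1$ on that whole window. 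Second, even for small $n$ the value of $b_m$ at the transition point need not be $1$: the point $p_0L_{m+1}p_m=3L_{m+1}p_m$ is a multiple of $2p_m$ whenever $3L_{m+1}$ is even (the construction imposes no parity condition on $L_{m+1}$), and then $b_m(p_0L_{m+1}p_m)=a_m(0)=0$; and when it is congruent to $p_m$, the guaranteed flat region of $b_m$ around points congruent to $\pm p_m$ has radius only about $p_{m-1}L_m=p_m/9$, which is not large compared with $2p_n$ for all $n\leq m$.

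What actually rescues the straddling cases --- and what the paper uses --- is the flatness of $c_{m+1}$, not of $b_m$: since $b_0\equiv 1$ on $[2,4]$ and on $[8,10]$, the function $c_{m+1}(t)=b_0\bigl(t/(p_mL_{m+1})\bigr)$ equals $1$ on intervals of radius $p_mL_{m+1}\geq p_m^3>2p_m\geq 2p_n$ centred at $3p_mL_{m+1}=p_0L_{m+1}p_m$ and at $9p_mL_{m+1}=p_{m+1}$ (for the wrap-around past $p_{m+1}$ one also uses that $c_{m+1}$ is $2p_{m+1}$-periodic). Hence $b_{m+1}=\max\{b_m(\cdot\ \text{or}\ \cdot+1),c_{m+1}\}$ equals $1$ at both $t$ and $t+2p_n$ whenever they straddle a transition point, irrespective of the values of $b_m$, and the difference is $0$ there. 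You were close --- your parenthetical ``(hence $b_0$, hence $c_{m+1}$)'' points at the right function --- but the step as you state it, checking that $b_m$ itself is $1$ near the transitions, would fail. With this substitution your argument coincides with the paper's proof; the base case, the same-region estimates and the periodicity reduction are fine as you describe them.
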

\begin{proof}
If $m=n$, then $b_m(t+2p_n)=b_m(t)$ since $b_m$ has a period $2p_n$.
Now assume that for some $m\geq n$ we have
$\vert b_m(t+2p_n)-b_m(t)\vert<\eps_n$ for any $t\in\R$.
We are going to show that $\vert b_{m+1}(t+2p_n)-b_{m+1}(t)\vert<\eps_n$ for any $t\in\R$.
Since $b_{m+1}$ has a period $2p_{m+1}$, we can only consider $t\in[-p_{m+1},p_{m+1}]$.

First, consider any $t\in [-p_{m+1}, p_0L_{m+1} p_m-2p_n]$. In this case, we have
\begin{align*}
|b_{m+1}(t+2p_n)-b_{m+1}(t)| &= |a_{m+1}(t+2p_n)-a_{m+1}(t)|\\
&=|\max\set{b_{m}(t+2p_n),c_{m+1}(t+2p_n)}-\max\set{b_{m}(t),c_{m+1}(t)}|\\
&\leq  \max \set{|b_{m}(t+2p_n)-b_{m}(t)|, |c_{m+1}(t+2p_n)-c_{m+1}(t)|}\\
&\leq  \max\set{\eps_n, |c_{m+1}(t+2p_n)-c_{m+1}(t)|}\\
&\leq \max\set{\eps_n, 2\eps_{m+1} p_n / p_{m+1}}\\
&\leq  \eps_n
\end{align*}
Similar calculations yield that for any $t\in [p_0L_{m+1} p_m, p_{m+1}-2p_n]$ we have
\begin{eqnarray*}
|b_{m+1}(t+2p_n)-b_{m+1}(t)| &=& |\max\set{b_{m}(t+2p_n+1),c_{m+1}(t+2p_n)}\\
&&\quad\quad\quad\quad\quad\quad-\max\set{b_{m}(t+1),c_{m+1}(t)}|\\
&\leq & \eps_n.
\end{eqnarray*}
If $t\in [p_0L_{m+1} p_m-2p_n, p_0L_{m+1} p_m]$, then
\begin{align*}
  t/(p_{m}L_{m+1})&\in [p_0-2p_n/(p_{m}L_{m+1}),p_0]\subset [2,3],\\
  (t+2p_n)/(p_{m}L_{m+1})&\in [p_0,p_0+2p_n/(p_{m}L_{m+1})]\subset [3,4].
\end{align*}
But for any $s\in [2,4]$ we have $b_0(s)=1$.
Then $b_{m+1}(t)=c_{m+1}(t)=1=c_{m+1}(t+2p_n)=b_{m+1}(t+2p_n)$.

Finally, if $t\in [p_{m+1}-2p_n,p_{m+1}]$, then
\begin{align*}
  t/(p_{m}L_{m+1})&\in [p_0^2-2p_n/(p_{m}L_{m+1}),p_0^2]\subset [8,9],\\
  (t+2p_n)/(p_{m}L_{m+1})&\in [p_0^2,p_0^2+2p_n/(p_{m}L_{m+1})]\subset [9,10].
\end{align*}
But for any $s\in [8,10]$ we have $b_0(s)=1$.
Then $b_{m+1}(t)=c_{m+1}(t)=1=c_{m+1}(t+2p_n)=b_{m+1}(t+2p_n)$.
This ends the proof.
\end{proof}

\begin{lem}\label{lem:urigid}
The dynamical system $(\mathbb{X},\sigma)$ is uniformly rigid.
\end{lem}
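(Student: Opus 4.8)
The plan is to reduce the statement to the symbolic criterion for uniform rigidity provided by Lemma~\ref{lem:urec} and then invoke the two approximation lemmas already established. Since $\mathbb{X}=\overline{\orbp(\alpha,\sigma)}$ is a transitive subsystem of $([0,1]^{\N_0},\sigma)$ for which $\alpha$ is a transitive point, and $\alpha(n)=a_\infty(n)$, it is enough to exhibit an increasing sequence $\set{n_i}$ such that for every $\eps>0$ there is $N$ with $|a_\infty(n_i+j)-a_\infty(j)|<\eps$ for all $i>N$ and all $j\geq 0$. The natural choice is $n_i=2p_i$, the ``periods'' appearing in the recursive construction.

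First I would fix $\eps>0$ and pick $N\in\N$ with $1/N<\eps$; then fix $i>N$ and $j\geq0$. The key point is that $a_\infty$ agrees \emph{locally} with a periodized block of ``height'' at most $p_m$: choosing $m\geq i$ large enough that $p_m\geq 2p_i+j$ (possible since $p_m\to\infty$), both $j$ and $2p_i+j$ lie in $[-p_m,p_m]$, so Lemma~\ref{lem:copy_up} together with the defining identity $b_m=a_m$ on $[-p_m,p_m]$ gives $a_\infty(j)=a_m(j)=b_m(j)$ and $a_\infty(2p_i+j)=a_m(2p_i+j)=b_m(2p_i+j)$. Now Lemma~\ref{lem:copy_extend}, applied with the index $i$ in place of $n$ and $t=j$ (note $0\leq i\leq m$ holds because $p_m\geq 2p_i>p_i$ forces $m>i$), yields
\[
|a_\infty(2p_i+j)-a_\infty(j)|=|b_m(j+2p_i)-b_m(j)|<\eps_i=1/i<1/N<\eps,
\]
using that $\eps_n=1/n$ is decreasing and $i>N$. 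This is exactly the estimate required by Lemma~\ref{lem:urec}, which then gives that $(\mathbb{X},\sigma)$ is uniformly rigid.

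I do not anticipate a real obstacle here: all the analytic work has already been done. The only things to watch are bookkeeping details — that $m$ can be taken with $m>i$ so the hypothesis $0\leq i\leq m$ of Lemma~\ref{lem:copy_extend} is met, that the bound $\eps_i$ does not depend on $j$ (so the same $N$ works uniformly in $j$, as Lemma~\ref{lem:urec} demands), and that $\alpha$ is genuinely a transitive point of the subsystem $\mathbb{X}$ of the Bebutov system. The substantive inequality, the near-periodicity $|b_m(t+2p_n)-b_m(t)|<\eps_n$, is Lemma~\ref{lem:copy_extend} itself, whose case analysis over the flat regions of value~$1$ is the true heart of the construction and has already been carried out.
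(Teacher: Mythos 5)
Your proof is correct and follows essentially the same route as the paper: it combines Lemma~\ref{lem:copy_up} (to identify $a_\infty$ with $a_m=b_m$ on $[-p_m,p_m]$ for $m$ large) with the near-periodicity estimate of Lemma~\ref{lem:copy_extend}, and then invokes the criterion of Lemma~\ref{lem:urec} along the sequence $2p_i$. The only difference is cosmetic bookkeeping (you quantify over $j\geq 0$ and $i>N$ directly, while the paper fixes one $n$ with $\eps_n<\eps$ and verifies the estimate for all real arguments), so no further comment is needed.
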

\begin{proof}
Fix any $\eps>0$.
Take $n>0$ such that $\eps_n<\eps$.
Fix any $t\in \R$ and let $m>n$ be such that $t\in [-p_m,p_m-2 p_n]$. Then by Lemma~\ref{lem:copy_extend} we have
\[|a_m(t+2p_n)-a_m(t)|<\eps_n\]
and by Lemma~\ref{lem:copy_up} we obtain that $a_\infty(t)=a_m(t)$ and $a_\infty(t+2p_n)=a_m(t+2p_n)$.
This proves that for every $s\in \R$ the following condition is satisfied
\[|a_\infty(s+2p_n)-a_\infty(s)|<\eps_n.\]
The proof is completed by Lemma~\ref{lem:urec}.
\end{proof}

\begin{lem}\label{lem:returns}
For any $n\geq 0$ and any $t\in [-p_{n},p_{n}]$
we have
\[a_\infty(t)=a_\infty(t-2p_0 L_{n+1}p_n)=a_\infty(t+2p_0 L_{n+1}p_n-1).\]
\end{lem}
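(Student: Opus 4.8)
The plan is to reduce the three-way identity to a single identity for the function $a_{n+1}$, and then to read that identity off from the piecewise formula defining $a_{n+1}$.

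First I would record the two arithmetic identities $2p_0L_{n+1}p_n=\tfrac{2}{3}p_{n+1}$ and $p_0L_{n+1}p_n=\tfrac{1}{3}p_{n+1}$, which follow at once from $p_{n+1}=p_0^2L_{n+1}p_n$ and $p_0=3$. Using $L_{n+1}\ge p_n^2\ge p_0^2=9$, elementary interval estimates then show that for every $t\in[-p_n,p_n]$ the three points $t$, $t-2p_0L_{n+1}p_n$ and $t+2p_0L_{n+1}p_n-1$ all lie in $[-p_{n+1},p_{n+1}]$; moreover the first two lie in the first branch $[-p_{n+1},p_0L_{n+1}p_n]$ of the definition of $a_{n+1}$, while the third lies in the second branch $(p_0L_{n+1}p_n,p_{n+1}]$. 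Since $t\in[-p_n,p_n]$ and the two shifts lie in $[-p_{n+1},p_{n+1}]$, Lemma~\ref{lem:copy_up} gives $a_\infty=a_{n+1}$ at each of the three points, so it suffices to prove that
\[a_{n+1}(t)=a_{n+1}(t-2p_0L_{n+1}p_n)=a_{n+1}(t+2p_0L_{n+1}p_n-1).\]

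Then I would expand $a_{n+1}$ at the three points. From the definition, $a_{n+1}(t)=\max\set{b_n(t),c_{n+1}(t)}$, $a_{n+1}(t-2p_0L_{n+1}p_n)=\max\set{b_n(t-2p_0L_{n+1}p_n),c_{n+1}(t-2p_0L_{n+1}p_n)}$, and, crucially, $a_{n+1}(t+2p_0L_{n+1}p_n-1)=\max\set{b_n(t+2p_0L_{n+1}p_n),c_{n+1}(t+2p_0L_{n+1}p_n-1)}$, the ``$+1$'' built into the second branch exactly cancelling the ``$-1$'' in the shift. For the $b_n$-terms I use that $b_n$ has period $2p_n$ and that $2p_0L_{n+1}p_n$ is an integer multiple of $2p_n$ (as $p_0L_{n+1}\in\N$), so $b_n(t\pm 2p_0L_{n+1}p_n)=b_n(t)$. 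For the $c_{n+1}$-terms, recall $c_{n+1}(s)=b_0(s/(p_nL_{n+1}))$ and that $b_0$ has period $2p_0$: at $s=t-2p_0L_{n+1}p_n$ the argument of $b_0$ decreases by exactly $2p_0$, so $c_{n+1}(t-2p_0L_{n+1}p_n)=c_{n+1}(t)$; at $s=t+2p_0L_{n+1}p_n-1$ the argument increases by $2p_0-1/(p_nL_{n+1})$, hence, modulo the period, $c_{n+1}(t+2p_0L_{n+1}p_n-1)=b_0((t-1)/(p_nL_{n+1}))$, and since $|t|\le p_n$ and $L_{n+1}\ge 9$ both $t/(p_nL_{n+1})$ and $(t-1)/(p_nL_{n+1})$ lie in $[-1,1]$, where $b_0\equiv 0$; so this term equals $0=c_{n+1}(t)$. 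Combining the three computations yields the displayed identity, and the lemma follows.

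The only genuinely delicate point is the second branch: there the shift is a full period neither for $b_n$ nor for the ``slow'' function $c_{n+1}$, and one must see that the ``$+1$'' correction inside $a_{n+1}$ repairs the $b_n$-contribution exactly, while the residual $1/(p_nL_{n+1})$ error in the argument of the $c_{n+1}$-contribution is harmless because it never leaves the flat region on which $b_0$ vanishes near the origin. Everything else is routine bookkeeping with the periods $2p_n$ and $2p_0$ and the interval inclusions coming from $L_{n+1}\ge p_n^2$.
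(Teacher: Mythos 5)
Your proposal is correct and follows essentially the same route as the paper's proof: reduce to $a_{n+1}$ via Lemma~\ref{lem:copy_up}, identify which branch of the definition each shifted point falls into (with the built-in ``$+1$'' cancelling the ``$-1$'' in the second branch), and then use the $2p_n$-periodicity of $b_n$ together with the vanishing of the $c_{n+1}$-terms (the paper checks this by noting the rescaled arguments land where $b_0=0$, you by the $2p_0$-periodicity of $b_0$ plus the flat region near $0$ --- the same computation). The bookkeeping with $L_{n+1}\geq p_n^2$ and the branch inclusions is accurate, so there is nothing to correct.
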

\begin{proof}
Fix any $t\in [-p_n,p_n]$ and let $s=-2p_0 L_{n+1}p_n$. Then
$t+s\in [-p_{n+1},p_0 L_{n+1}p_n]$ and
\[\frac{t+s}{p_nL_{n+1}}=\frac{t}{p_nL_{n+1}}-2p_0\in[-7,-5].\]
But for any $s\in [-7,-5]$ we have $b_0(s)=0$.
Then $c_{n+1}(s+t)=c_{n+1}(t)=0$.
By Lemma~\ref{lem:copy_up}
\begin{align*}
a_\infty(s+t)&=a_{n+1}(s+t)=\max\set{c_{n+1}(s+t),b_n(s+t)}=b_n(s+t)\\
&=b_n(t)= a_n(t)=a_\infty(t).
\end{align*}
Similarly, if $s'=2p_0L_{n+1}p_n-1$ then $t+s'\in (p_0 L_{n+1} p_n,p_{n+1}]$ and
\begin{align*}
a_\infty(s'+t)&=a_{n+1}(s'+t)=\max\set{c_{n+1}(s'+t),b_n(s'+t+1)}=b_n(s'+t+1)\\
&=b_n(t)=a_n(t).
\end{align*}
This ends the proof.
\end{proof}

\begin{lem}\label{alpha:wm}
The dynamical system $(\mathbb{X},\sigma)$ is weakly mixing.
\end{lem}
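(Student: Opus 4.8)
The plan is to apply Lemma~\ref{lem:WM} to the transitive point $\alpha$. Since $(\mathbb{X},\sigma)$ is transitive and $\alpha$ has dense orbit (being the point whose orbit closure is $\mathbb{X}$), it suffices to show that for every open neighbourhood $U$ of $\alpha$ there is an integer $n\geq 1$ with $n,n+1\in N(U,U)$. Shrinking $U$, I may assume $U=\{y\in\mathbb{X}\colon d(y,\alpha)<\eps\}$ for some $\eps>0$; fixing $k$ with $2^{-k}<\eps$, every $y$ which agrees with $\alpha$ on the coordinates $0,1,\ldots,k$ then lies in $U$. So, to obtain $m\in N(U,U)$ it is enough to exhibit a single point $z\in\mathbb{X}$ with $z(j)=\alpha(j)$ and $z(m+j)=\alpha(j)$ for all $0\leq j\leq k$ (then $z\in U$ and $\sigma^{m}z\in\sigma^{m}(U)\cap U$); equivalently, it is enough to find two occurrences of the block $\alpha(0)\alpha(1)\cdots\alpha(k)$ inside $\alpha$ whose starting positions differ by $m$.

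Fix a large $N$ with $p_N>k$ and put $\ell=2p_0L_{N+1}p_N$; the claim is that $n=\ell-1$ does the job. For $m=\ell-1$ this follows at once from the equality $a_\infty(t)=a_\infty(t+2p_0L_{N+1}p_N-1)$ of Lemma~\ref{lem:returns}: with $t=j\in\{0,\ldots,k\}\subset[-p_N,p_N]$ it gives $\alpha(j)=a_\infty(j)=a_\infty(j+\ell-1)=\alpha(j+\ell-1)$, so $\sigma^{\ell-1}\alpha$ and $\alpha$ agree on the first $k+1$ coordinates, and $z=\alpha$ witnesses $\ell-1\in N(U,U)$.

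The value $m=\ell$ is more delicate, and I expect this to be the main obstacle. Here $\sigma^{\ell}\alpha$ need not belong to $U$ (in fact $\sigma^{\ell}\alpha$ turns out to be close to $\sigma\alpha$, because of the one-position shift built into the formula for $a_{N+1}$ on the interval $(p_0L_{N+1}p_N,p_{N+1}]$), so $\alpha$ cannot be reused as a witness. The idea is to exploit the \emph{other} equality $a_\infty(t)=a_\infty(t-2p_0L_{N+1}p_N)$ of Lemma~\ref{lem:returns} to relocate the block to a much larger scale. Choose $M$ with $p_M>\ell$ and put $\ell'=2p_0L_{M+1}p_M$ (so $\ell'-1-\ell\geq 1$). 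For $0\leq j\leq k$, the scale-$N$ equality yields $a_\infty(j-\ell)=a_\infty(j)$, while the scale-$M$ equality, applied at $t=j$ and at $t=j-\ell$ (both lying in $[-p_M,p_M]$), yields $a_\infty(j+\ell'-1)=a_\infty(j)$ and $a_\infty(j-\ell+\ell'-1)=a_\infty(j-\ell)=a_\infty(j)$. Evaluating at integers, the block $\alpha(0)\cdots\alpha(k)$ occurs in $\alpha$ at the (nonnegative) positions $\ell'-1-\ell$ and $\ell'-1$, and these differ by $\ell$; hence $z=\sigma^{\ell'-1-\ell}\alpha\in\mathbb{X}$ satisfies $z(j)=\alpha(j)$ and $z(\ell+j)=\alpha(\ell'-1+j)=\alpha(j)$ for $0\leq j\leq k$, so $\ell\in N(U,U)$.

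Combining the two, $n=\ell-1\geq 1$ satisfies $n,n+1\in N(U,U)$, and Lemma~\ref{lem:WM} gives that $(\mathbb{X},\sigma)$ is weakly mixing. The heart of the argument is the identification of the two consecutive return times: Lemma~\ref{lem:returns} provides, at each scale $N$, the two adjacent recurrence offsets $2p_0L_{N+1}p_N-1$ and $2p_0L_{N+1}p_N$ --- adjacent precisely because of the intentional ``$+1$'' in the construction of $a_{N+1}$ --- and turning the larger offset into a genuine return requires pushing the relevant occurrence to a nonnegative coordinate by also using the lemma at an auxiliary scale $M\gg N$. The remaining ingredients (the reduction through Lemma~\ref{lem:WM}, the passage from the metric $d$ to finite prefixes, and the elementary comparisons among the $p_i$ and $L_i$) are routine.
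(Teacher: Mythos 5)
Your proof is correct and follows essentially the same route as the paper's: both reduce to Lemma~\ref{lem:WM} and use the two identities of Lemma~\ref{lem:returns} to realize the consecutive return times $2p_0L_{N+1}p_N-1$ and $2p_0L_{N+1}p_N$ for an arbitrarily long prefix of $\alpha$. The only differences are cosmetic: for the even offset the paper uses the backward-extended point $\beta(i)=a_\infty(i-2p_0L_{N+1}p_N)$ (which lies in $\mathbb{X}$ because central blocks of $a_\infty$ occur in $\alpha$), whereas you push both occurrences to nonnegative coordinates via the identity at a larger scale $M$ and use the genuine orbit point $\sigma^{\ell'-1-\ell}\alpha$; also, the transitivity needed to invoke Lemma~\ref{lem:WM} should be noted explicitly---it follows from the recurrence of $\alpha$, which your $m=\ell-1$ computation (for all $k$) already supplies, just as the paper records it as its first observation.
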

\begin{proof}
First observe that by Lemma~\ref{lem:returns} for any $m\in \Z$ and
any $k>0$ the sequence $a_{\infty}(m-k),a_{\infty}(m-k+1),\ldots, a_{\infty}(m+k)$
is a block in $\alpha$ (in fact appears infinitely many times). This shows that $\alpha$ is recurrent.

Fix any neighborhood $U$ of $\alpha$ and fix $\eps>0$ such that if $d(\beta,\alpha)<\eps$ then $\beta\in U$.
Take an enough large integer $n$ such that $\sum_{i=p_n}^\infty 2^{-i+1}<\eps$ and put $N=2p_0 L_{n+1}p_n-1$.

Let $\beta(i)=a_{\infty}(i-N-1)$ for $i\geq 0$. Note that $\beta\in \mathbb{X}$.
By Lemma~\ref{lem:returns} we obtain that
\begin{align*}
d(\alpha,\beta)&=\sum_{i=0}^{\infty}\frac{|\alpha(i)-\beta(i)|}{2^i} = \sum_{i=0}^{\infty}\frac{|a_\infty(i)-a_\infty(i-2p_0L_{n+1}p_n)|}{2^i}\\
&\leq\sum_{i=p_{n}}^{\infty}\frac{2}{2^i}<\eps.
\end{align*}
This shows that $\beta\in U$. Note that $\sigma^{N+1}(\beta)=\alpha\in U$.
Similar calculations show that also $\sigma^{N}(\alpha)\in U$.
By Lemma~\ref{lem:WM} $(\mathbb{X},\sigma)$ is weakly mixing.
\end{proof}

\begin{lem}\label{lem:4.10}
The dynamical system $(\mathbb{X},\sigma)$ is proximal.
\end{lem}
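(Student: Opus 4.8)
The plan is to show that the constant sequence $1^\infty$ is a fixed point of $(\mathbb{X},\sigma)$ which is the unique minimal point, and to conclude proximality from the characterization of proximal systems proved above (the implication (3)$\Rightarrow$(1)). Since $\alpha$ is a transitive point of $(\mathbb{X},\sigma)$, the ``unique minimal point'' part will follow from Lemma~\ref{lem:proximal} applied to $\alpha$ and to the closed invariant set $A=\set{1^\infty}$, once we verify two things: that $1^\infty\in\mathbb{X}$, and that $N(\alpha,U)$ is syndetic for every neighbourhood $U$ of $1^\infty$.

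Both verifications rest on one structural fact about $\alpha$: for every $n\geq1$ the sequence $\alpha$ contains blocks consisting only of the symbol $1$ of length at least $p_{n-1}L_n$, and these blocks occur with gaps bounded by a constant $C_n$ depending only on $n$ (of the order of $p_{n-1}L_n$). The source of the blocks is $c_n(t)=b_0(t/(p_{n-1}L_n))$, which equals $1$ on intervals of length $2p_{n-1}L_n$ spaced periodically with period $6p_{n-1}L_n$; using $a_n\geq c_n$ on $[-p_n,p_n]$ one already finds there flat blocks of $1$'s of the claimed length with gaps $O(p_{n-1}L_n)$. One then propagates these blocks through the recursion: passing from $a_m$ to $a_{m+1}$ only replaces a value by a larger one (taking the maximum with $c_{m+1}$, which cannot erase a $1$) and translates the pattern rigidly by one position on $(p_0L_{m+1}p_m,p_{m+1}]$; a rigid translation neither shrinks a flat block nor enlarges a gap between flat blocks, and the single junction $p_0L_{m+1}p_m=3p_mL_{m+1}$ lies well inside a flat block of $c_{m+1}$ because $b_0\equiv1$ on $[2,4]$. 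Hence the flat blocks of $1$'s survive into every $a_m$ with $m\geq n$, and into $a_\infty$ by Lemma~\ref{lem:copy_up} (which identifies $a_\infty$ with $a_m$ on $[-p_m,p_m]$). Letting $m\to\infty$, and noting that $b_m$ extends $a_m|_{[-p_m,p_m]}$ periodically while a flat block of $c_m$ straddles each endpoint $\pm p_m$ (since $b_0(9)=1$), one obtains the asserted flat blocks of $1$'s, of length $\geq p_{n-1}L_n$ and with gaps $\leq C_n$, among the non-negative coordinates of $\alpha$.

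Granting this, the conclusion is routine. As $L_n\geq p_{n-1}^2$, the lengths $p_{n-1}L_n$ tend to infinity, so a subsequence of $\set{\sigma^k(\alpha)}$ converges to $1^\infty$; thus $1^\infty\in\mathbb{X}$, and it is clearly a fixed point. Given a neighbourhood $U$ of $1^\infty$, pick $\eps>0$ with $V_\eps(1^\infty)\subset U$, then $M$ with $2^{-M}<\eps$, then $n$ with $p_{n-1}L_n\geq M+1$. Whenever a coordinate $k\geq0$ begins $M+1$ consecutive $1$'s in $\alpha$ we get $d(\sigma^k(\alpha),1^\infty)\leq 2^{-M}<\eps$, so $\sigma^k(\alpha)\in U$; by the structural fact such $k$ occur with bounded gaps, so $N(\alpha,U)$ is syndetic. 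Lemma~\ref{lem:proximal} then gives that every minimal subset of $\mathbb{X}$ lies in $\set{1^\infty}$, so $1^\infty$ is a fixed point which is the unique minimal point of $(\mathbb{X},\sigma)$, whence $(\mathbb{X},\sigma)$ is proximal.

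The main obstacle is the combinatorial bookkeeping of the middle paragraph: one must check that the shift-by-one carried out at every level of the recursion, together with the periodic extensions, neither destroys the flat blocks of $1$'s nor allows the gaps between consecutive such blocks to grow as the level tends to infinity. What makes this work is that every shift is performed over a region where $c_{m+1}\equiv1$ and that a shift is a rigid translation; the verification runs along the lines of Lemmas~\ref{lem:copy_up}, \ref{lem:copy_extend} and \ref{lem:returns}.
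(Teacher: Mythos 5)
Your proposal is correct and follows essentially the same route as the paper: reduce via Lemma~\ref{lem:proximal} (with $A=\set{1^\infty}$, so that the fixed point $1^\infty$ is the unique minimal point and Proposition~3.2 applies) to showing that arbitrarily long blocks of the symbol $1$ occur syndetically in $\alpha$, and prove this by tracking the flat $1$-blocks coming from $c_n$ through the inductive construction, using that taking maxima cannot erase a $1$ and that the shift by one position occurs inside a region where $c_{m+1}\equiv 1$. The paper's proof is just a more condensed version of the same induction (with the explicit bound: blocks of length $p_n/9$ with gaps less than $2p_n$), so no further changes are needed.
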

\begin{proof}
By Lemma~\ref{lem:proximal}, it suffices to show that for any $k>0$,
the block of $k$ consecutive symbols $1$ appears in $\alpha$ syndetically.
In fact, we can show that the block of $p_n/9$ consecutive symbols $1$ appears in $\alpha$ with gap less that $2p_n$.
By Lemma \ref{lem:copy_up}, it suffices to show the following claim.
Note that when consider subblocks of $a_m$, we only consider integer coordinates.
\begin{claim}
For any $m\geq n\geq 1$,
the block of $p_n/9$ consecutive symbols $1$ appears in $a_m[-p_m,p_m]$  with gap less that $2p_n$.
\end{claim}
\noindent\textit{Proof of the Claim.}
Note that $a_n(t)\geq c_n(t)$ for $t\in[-p_n,p_n]$.
Clearly, $c_n[-p_n,p_n]$ begins
with the block of $p_n/9$ consecutive symbols $1$, so does $a_n$.
Then the block of $p_n/9$ consecutive symbols $1$ appears in $b_n$ with gap less that $2p_n$.
By the definition, we have
\begin{eqnarray*}
a_{n+1}(t)\geq b_n(t)&&\text{ for } t\in[-p_{n+1},p_0L_{n+1}p_n],\\
a_{n+1}(t)\geq b_n(t+1)&& \text{ for }t\in(p_0L_{n+1}p_n,p_{n+1}],\\
a_{n+1}(t)\geq c_{n+t}(t)&&\text{ for }t\in [p_0L_{n+1}p_n-2p_n, p_0L_{n+1}p_n+2p_n].
\end{eqnarray*}
It is easy to check that $c_{n+1}(t)=1$ for $t\in [p_0L_{n+1}p_n-2p_n, p_0L_{n+1}p_n+2p_n]$.
So the claim holds for $a_{n+1}$.
By induction, we known the claim holds for all $m\geq n$.
\end{proof}

\begin{proof}[Proof of Theorem~\ref{thm:main_wm}]
It is enough to combine Lemmas~\ref{lem:returns}, \ref{lem:urigid} and \ref{lem:4.10}.
\end{proof}

By Lemma~\ref{lem:connected}, we known that the space $\mathbb{X}$ is connected.
Unfortunately, it is hard to say what is the topological dimension of $\mathbb{X}$.
Since it is embedded in Hilbert cube, the topological dimension can be infinite.
In contrast to that, the authors of \cite{HYCS}
constructed completely scrambled homeomorphism on Cantor fan (which is dendroid), but
it is impossible to construct such a map on a dendrite (see \cite[Corollary~1.2]{IN}). So it is not completely clear which compacta admit
completely scrambled homeomorphisms and which do not
(and it is not easily related to its topological dimension).
Additionally, even if we have
a continuum of topological dimension $n$ with a transitive completely scrambled map,
then we cannot directly use the technique of \cite{HYCS} to increase
its topological dimension to $n+1$ because we will obtain a non-transitive map.

Motivated by the above discussion, we state the following open question.
\begin{que}\label{que1}
Given an integer $n\geq 1$, is it possible to construct a continuum $X$ of topological dimension $n$ and a completely scrambled
weakly mixing (or transitive) homeomorphism on it?
\end{que}

Recall that  a subset $S$ of $X$  is strongly scrambled if
for any two distinct points $x,y\in S$, the pair $(x,y)$ is proximal for $(X,f)$ and recurrent for $(X\times X, f\times f)$.

The anonymous referee of this paper suggested to call a dynamical system $(X,f)$ \emph{completely strongly scrambled} when
the whole space $X$ is a strongly scrambled set, or equivalently,
$(X,f)$ is proximal and $(X\times X, f\times f)$ is pointwise recurrent. Before we state a question about completely strongly scrambled
systems, let us make some introductory observations.

For a dynamical system $(X,f)$,
if $(X\times X, f\times f)$ is pointwise recurrent and the set of minimal points of $(X,f)$ is not dense in $X$,
then a completely strongly scrambled factor of $(X,f)$ is constructed by collapsing the closure of the set of
minimal points to a single point.
Note that if $(X,f)$ is uniformly rigid, then $(X\times X, f\times f)$ is pointwise recurrent and hence
examples constructed in Theorem~\ref{thm:A} are completely strongly scrambled systems.
By results of \cite{AAB} every transitive, almost equicontinuous and non-minimal system never
has a dense set of minimal points which is
uniformly rigid,
hence it has a completely strongly scrambled factor
(it was first observed by Huang and Ye in~\cite{HY02}).

Note that the starting point of the construction of completely scrambled systems
in~\cite{HYCS} is a completely scrambled homeomorphisms on a countable space.
But this example is not completely strongly scrambled, since it is clear that
completely strongly scrambled systems can exist only on perfect spaces.

It is shown in~\cite{AGW05} that if $X$ is a zero dimensional compact metric space
then any homeomorphism $f$ acting on $X$ is pointwise recurrent if and only if
$X$ is a union of minimal sets.
Then there is no completely strongly scrambled systems on zero dimensional spaces.
This leads to the following question.

\begin{que}
Given an integer $n\geq 1$, is it possible to construct a continuum $X$ of topological dimension $n$
and a completely strongly scrambled homeomorphism on it?
\end{que}

In \cite{HY02} the authors provided a simple method of construction of a completely scrambled system on $(n+1)$-dimensional continuum,
provided that a completely scrambled system on $n$-dimensional compact metric space is given. Unfortunately, this technique never gives
transitive system, however it produces pairwise recurrent system provided that we started form a pairwise recurrent system.

\section*{Acknowledgements}
The authors are grateful to L'ubomir Snoha for interesting discussions on properties of scrambled sets
and completely scrambled systems and to the anonymous referee of this paper for his numerous remarks and suggestions.

M. Fory\'s was supported by AGH local grant.  W.~Huang was supported by NNSF of China (11225105, 11431012).
J.~Li was supported by NNSF of China (11401362, 11471125) and NSF of Guangdong Province (S2013040014084).
P.~Oprocha was supported by the Polish Ministry of Science and Higher Education from sources
for science in the years 2013-2014, grant  no. IP2012~004272.

\end{document}